\renewcommand*{\backref}[1]{}
\numberwithin{equation}{section}
\theoremstyle{definition}
\theoremstyle{plain}
\newtheorem{thm}{Theorem}[section]
\newtheorem{Prop}[thm]{Proposition}
 \newtheorem{Thm}{Theorem}[section]
 \newtheorem{Rmk}[thm]{Remark}
 \newtheorem{Lem}[thm]{Lemma}
\def\bbE {\mathbb {E}}
\def\bbH {\mathbb{H}}
\def\bbP {\mathbb {P}}
\def\R {\mathbb{R}}
\def\bbR {\mathbb{R}}
\def\T {\mathbb{T}}
\def\bbW {\mathbb{W}}
\def\Z {\mathbb{Z}}
  \definecolor{darkgreen}{rgb}{0.0, 0.5, 0.0}
\def\cA {\mathcal{A}}
\def\cD {\mathcal{D}}
\def\cG {\mathcal{G}}
\def\cL {\mathcal{L}}
\def\cM {\mathcal{M}}
\def\cN {\mathcal{N}}
\def\cZ {\mathcal{Z}}
\def\eps {{\varepsilon}}
\def\e {{\varepsilon}}
\def\indc {{\bf 1}}
\def\la {\langle}
\def\ra {\rangle}
\def\d {{\partial}}
\newcommand{\Cov}{\operatorname{Cov}}
\newcommand{\Span}{\operatorname{span}}
\newcommand{\Ker}{\operatorname{Ker}}
\newcommand{\ba}{\begin{aligned}}
\newcommand{\ea}{\end{aligned}}
\newcommand{\be}{\begin{equation}}
\newcommand{\ee}{\end{equation}}
\numberwithin{equation}{section}
\begin{document}

\title[Fluctuating hydrodynamics for dilute gases] {Dynamics of dilute gases  at  equilibrium:\\
 from the atomistic description \\
 to fluctuating hydrodynamics}
\author{Thierry Bodineau, Isabelle Gallagher, Laure Saint-Raymond, Sergio Simonella}
\begin{abstract} 
  We   derive linear fluctuating hydrodynamics as the low density limit of a deterministic system of particles at equilibrium. 
The proof builds upon the results of~\cite{BGSS3} where the asymptotics of the covariance of the fluctuation field is obtained, and on the proof of the Wick rule for the fluctuation field in~\cite{BGSS4}.
\end{abstract}

\maketitle

This  paper is dedicated to the memory of K. Gawedzki. It shows how noise in hydrodynamic models of perfect gases can emerge from a deterministic microscopic dynamics. It is reminiscent of the concept of spontaneous stochasticity introduced in \cite{BGK} and  formalized  in  \cite{CGHKV}.

\section{The different levels of modeling}

\subsection{The atomistic description}
The microscopic model consists of  identical hard spheres of unit mass and of diameter~$\eps$.
The  motion of $N$ such hard spheres is ruled by a system of ordinary differential equations, which are set in~$ ( \T ^d\times\R^d)^{N }$ where~$\mathbb T^d$ is  the unit~$d$-dimensional periodic box with $d \geq 3$: writing~${\bf x}^{\e}_i \in  \T ^d$ for the position of the center of the particle labeled by~$i$ and~${\bf v}^{\e}_i \in  \R ^d$ for its velocity, one has
\begin{equation}
\label{hardspheres}
{d{\bf x}^{\e}_i\over dt} =  {\bf v}^{\e}_i\,,\quad {d{\bf v}^{\e}_i\over dt} =0 \quad \hbox{ as long as \ } |{\bf x}^{\e}_i(t)-{\bf x}^{\e}_j(t)|>\eps  
\quad \hbox{for \ } 1 \leq i \neq j \leq N
\, ,
\end{equation}
with specular reflection at collisions: 
\begin{equation}
\label{defZ'nij}
\begin{aligned}
\left. \begin{aligned}
 \left({\bf v}^{\e}_i\right)'& := {\bf v}^{\e}_i - \frac1{\eps^2} ({\bf v}^{\e}_i-{\bf v}^{\e}_j)\cdot ({\bf x}^{\e}_i-{\bf x}^{\e}_j) \, ({\bf x}^{\e}_i-{\bf x}^{\e}_j)   \\
\left({\bf v}^{\e}_j\right)'& := {\bf v}^{\e}_j + \frac1{\eps^2} ({\bf v}^{\e}_i-{\bf v}^{\e}_j)\cdot ({\bf x}^{\e}_i-{\bf x}^{\e}_j) \, ({\bf x}^{\e}_i-{\bf x}^{\e}_j)  
\end{aligned}\right\} 
\quad  \hbox{ if } |{\bf x}^{\e}_i(t)-{\bf x}^{\e}_j(t)|=\eps\,.
\end{aligned}
\end{equation}
 This flow does not cover all possible situations, as multiple collisions are excluded.
But one can show (see \cite{Ale75}) that for almost every admissible initial configuration~$({\bf x}^{\e 0}_i, {\bf v}^{\e 0}_i)_{1\leq i \leq N}$, there are neither multiple collisions,
nor accumulations of collision times, so that the dynamics is globally well defined. 

We will not be interested here in one specific realization of this deterministic dynamics,  but rather in a statistical description. This is achieved by introducing a measure at time~0, on the phase space we now specify.
The collections of~$N$ positions and velocities are denoted respectively by~$X_N := (x_1,\dots,x_N) $ in~$ \T^{dN}$ and~$V_N := (v_1,\dots,v_N) $ in~$ \R^{d N}$,  and we set~$Z_N:= (X_N,V_N) $, with~$Z_N =(z_1,\dots,z_N)$, $z_i = (x_i,v_i)$. A set of~$N$ particles is characterized by a random variable ${\mathbf Z}^{\eps 0}_N =  ({\mathbf z}^{\eps 0}_1,\dots , {\mathbf z}^{\eps 0}_N)$ specifying the time-zero configuration in the phase space 
\begin{equation}
\label{D-def}
{\mathcal D}^{\eps}_{N} := \big\{Z_N \in (\T ^d\times\R^d)^N \, / \,  \forall i \neq j \, ,\quad |x_i - x_j| > \eps \big\} \, ,
\end{equation}
and an evolution
$$ t \longmapsto {\mathbf Z}^{\eps}_N(t) = \big({\mathbf z}^{\eps}_1(t),\dots , {\mathbf z}^{\eps}_N(t)\big)\;,\qquad t>0$$
according to the deterministic flow \eqref{hardspheres}-\eqref{defZ'nij} (well defined with probability 1).

To avoid spurious correlations due to a given total number of particles, we   actually consider a grand canonical state (as in \cite{Ki75,BLLS80}), set on the phase space 
$$
{\mathcal D}^{\eps} := \bigcup_{N \geq 0}{\mathcal D}^{\eps}_{N}
$$
(notice that ${\mathcal D}^{\eps}_{N} = \emptyset$ for $N$ large). This means that the total number of particles is also a random variable,  which we shall denote by $\cN$.

More precisely, at equilibrium the probability density of finding $N$ particles at configuration~$Z_N$ is given by
\begin{equation}
\label{eq: initial measure}
\frac{1}{N!}W^{\eps }_{N}(Z_N) 
:= \frac{1}{\cZ^ \eps} \,\frac{\mu_\eps^N}{N!} \, \indc_{
{\mathcal D}^{\eps}_{N}}(Z_N) 
 \, \cM^{\otimes N}  (V_N)\, , \qquad \hbox{ for } N= 0, 1,2,\dots \end{equation} 
 for some (large)~$\mu_\eps$ to be fixed   below, with 
 \begin{equation*}
\label{eq: max}
  \cM(v) := \frac1{(2\pi)^\frac d2} \exp\big( {-\frac{ |v|^2}2}\big)\,,
\qquad \cM^{\otimes N}(V_N)= \prod_{i=1}^N \cM(v_i)\, , 
\end{equation*}
and the partition function is given by
\begin{equation*}
\label{eq: partition function}
\cZ^\eps :=  1 + \sum_{N\geq 1}\frac{\mu_\eps^N}{N!}  
\int_{\T^{dN} }   \prod_{i\neq j}\indc_{ |x_i - x_j| > \eps} \,  dX_N\,.
\end{equation*}
Here and below, $\indc_A$ will be the characteristic function of the set $A$.
The probability of an event~$A$ with respect to the equilibrium measure~(\ref{eq: initial measure}) will be denoted~${\mathbb P}_\eps(A)$, and~${\mathbb E}_\eps$ will be the expected value.
Definition \eqref{eq: initial measure} ensures that $$\mu_\e^{-1}{\mathbb E}_\eps\left(\cN\right)\to 1$$ as~$\mu_\e\to \infty$ with $\mu_\e \e^d \ll 1$.

\subsection{The kinetic description}
%
Let us define
the empirical measure  of the hard-sphere model
\begin{equation}
\label{eq: empirical}
\pi^{\eps}_t :=\frac{1}{\mu_\e}\sum_{i=1}^\cN \delta_{{\bf z}^\e_i(t)}  \, .
\end{equation} 
Under the invariant measure~(\ref{eq: initial measure}), it is not hard to see that if $\mu_\eps  \eps^d \to  0$ then~$\pi^{\eps}_t $  concentrates on~$\cM$:
for any test function $h: \T ^d\times\R^d\to\R$ and any $\delta>0$, $t \in \R$,
\begin{equation}\label{LLN}
\bbP_\eps \left( \Big|  \pi^\eps_t(h)-  
   \int_{\T ^d\times\R^d}  \cM (v)h(z)\Big| > \delta \right) \xrightarrow[\mu_\eps \to \infty]{} 0\;,
\end{equation}
which can be interpreted as a law of large numbers.

\smallskip

The fluctuations of the empirical density $\pi^{\eps}_t$ around its equilibrium value are described by the fluctuation field $\zeta^\eps_t$ defined by
\begin{equation}
\label{eq: fluctuation field}
\ \zeta^\eps_t(h):=  { \sqrt{\mu_\eps }} \, 
\Big(   \pi^\eps_t(h) -  {\mathbb E}_\eps\big(  \pi^\eps_t(h) \big)     \Big)\;,
 \end{equation}
 for any test function $h$.  
Initially~$\zeta^\eps_0$ converges in law towards a Gaussian white noise~$\zeta_0$ with covariance 
\begin{equation}
\label{eq: invariant measure}
\bbE \big( \zeta_0(h_1)\, \zeta_0(h_2) \big)
= \int h_1(z) \,h_2(z)\,\cM (v) \,dz\, . 
\end{equation}
As the measure is invariant, this covariance is constant  in time.
Let us define the mean free path 
$$\alpha:= (\mu_\eps \eps^{d-1})^{-1}\, ,  
$$
 and assume that~$
\alpha^{-1}  \geq 1 $ is bounded or slowly diverging, corresponding to the low density scaling. In this scaling it  has been proved in \cite{BGSS3,BGSS4} that~$ (\zeta^\eps_t)_{[0,T]}$ converges in law for all times~$T$ to a weak solution of  the fluctuating Boltzmann equation 
\begin{equation}
\label{eq: OU}
d \zeta_t   = \left(  - v \cdot \nabla_x  -\frac1\alpha \cL\right)  \,\zeta_t\, dt + d\eta_t\,,
\end{equation}
where the linearized collision operator is given by
\begin{equation} \label{eq:LBCO}
   \cL g :=  \int_{\R^d \times {\mathbb S}^{d-1}}  \cM(w) \big( (v - v_*) \cdot \omega \big)_+ \left[g(v) +  g(v_*) - g(v') -  g(v_*') \right]  d v_*\,d \omega
 \end{equation}
 with notation
\begin{equation}
\label{scattlaw}
v' = v- ((v - v_*) \cdot \omega)\,  \omega\, , \quad v_*' = v_*+  ((v-v_*) \cdot \omega)\,  \omega
\end{equation}
for the precollisional velocities obtained upon scattering, 
and  $d \eta_t(x,v)$ is a stationary Gaussian noise, explicitly characterized (see \cite{S83}). It has zero mean and covariance
\begin{equation}
\label{eq: noise covariance}
\begin{aligned}
& \bbE \left( \int_0^T   \! \! dt   \! \!  \int dz   h^{(1)} (z)  \eta_{t} (z)     \int_0^T    \! \!  dt_*   \! \! \int dz_* \, h^{(2)} (z_*) \eta_{t_*} (z_*) \right)
 \\ &\qquad \qquad= \frac{1}{2\alpha} \int_0^T    \! \!  dt  \! \!  \int d\mu(z, z_*, \omega) 
 \cM(v)\, \cM(v_*) \Delta h^{(1)}  \, \Delta h^{(2)} 
\end{aligned}
\end{equation}
denoting
$$
d\mu (z, z_*, \omega): = \delta_{x - x_*}  \,  \big( (v - v_*) \cdot \omega \big)_+ d \omega\, d v\, d v_* dx 
$$
and defining  for any $h$
$$  \Delta h (z, z_*, \omega) := h(z') +  h(z_*') -  h(z) -  h(z_*)\,, $$
where~$z_i':=(x_i,v_i')$ with notation
(\ref{scattlaw}) 
for the velocities obtained upon scattering. Note that this noise is white in time and space, but correlated in  
velocities.

\subsection{The hydrodynamic description}

It is by now classical (see \cite{BGL2, GL, BGSR2} and references therein)  that the solutions to the  scaled linearized Boltzmann equation
\begin{equation} 
\label{eq:scaled- lbeq}
 \partial_t g_\alpha +v\cdot \nabla_x g_\alpha  +\frac1\alpha \cL g_\alpha =0\,, \quad g_\alpha (0) = g^0
\end{equation} converge in the fast relaxation limit $\alpha \to 0$ towards the local thermodynamic equilibrium
$$ g(t,x,v) = \rho(t,x) +u(t,x) \cdot v +\theta(t,x) {|v|^2- d \over 2} $$
where $\rho, u, \theta$ satisfy the acoustic equations
\begin{equation}
\label{acoustic}
\begin{cases}
\d_t  \rho +  \nabla_x \cdot  u = 0\,\\
\d_t u + \nabla_x (\rho +\theta) = 0 \\
 \d_t \theta  + \frac 2 d\nabla_x \cdot u  = 0  
\end{cases}
\end{equation}
and the initial data is the projection of~$g^0$ onto hydrodynamic modes
$$
\begin{aligned}
& \rho_{|t=0}(x) :=  \int g_0 (x,v)    \cM(v) \, dv 
\, , \quad u_{|t=0}(x) :=   \int g_0 (x,v) v \cM(v)  \, dv \, , \\ 
& \theta_{|t=0}(x) :=  \int g_0 (x,v)    \big(\frac{|v|^2 }d - 1\big)  \cM(v) \, dv \, .
\end{aligned}
$$
  In the linearized equation \eqref{eq:scaled- lbeq}, the frequency of collisions 
$1/\alpha$ has been tuned according to the hyperbolic scaling. 
The diffusive regime can then be found by  rescaling time by a factor~$1/\alpha$. 
In this way, one can also obtain the weak convergence (which actually filters out the fast oscillating  acoustic waves) 
\begin{equation}
\label{defutheta}
 g_\alpha \left(\frac \tau\alpha, x, v \right) \rightharpoonup u(\tau,x) \cdot v +\theta(\tau,x)  {|v|^2 -(d+2) \over 2} 
 \end{equation}
towards diffusive fluid models, namely the incompressible Stokes-Fourier equations
\begin{equation}
\label{Stokes}
\begin{cases}
\d_\tau  u = \nu  \Delta_x u  , \qquad \nabla_x \cdot  u = 0\\
 \d_\tau \theta  = \kappa  \Delta_x \theta   \,,
\end{cases}
\end{equation}
where the diffusion coefficients $\nu$ and $\kappa$ depend only on the linearized collision operator $\cL$ (they are defined explicitly in (\ref{mu-kappa}) below). The initial data is the projection of~$g^0$ onto  non oscillating hydrodynamic  modes
\begin{equation}
\label{eq: initial data}
u_{|\tau=0}(x) := P \int g_0v \cM(v)  \, dv \, , \quad \theta_{|\tau=0}(x) :=  \int g_0   \big( \frac{|v|^2}{ d+2} - 1\big)   \cM(v) \, dv 
\end{equation}
where~$ P$ is  the Leray projection on divergence free vector fields.
In the following, we refer to non oscillating modes as those satisfying the 
incompressibility and Boussinesq constraints (see \eqref{eq: non oscillating}).

\subsection{Fluctuating hydrodynamics}
\label{sec: Fluctuating hydrodynamics}

In the hyperbolic regime corresponding to \eqref{acoustic}, the fluctuation-dissipation principle predicts that there will be no dynamical fluctuation and the  fluctuation field tested against hydrodynamical modes $(\rho,u,\theta)$ is simply transported by the acoustic equation. In contrast, in the diffusive regime, when taking into account the noise at kinetic level (i.e.\;starting with~(\ref{eq: OU})), we expect to obtain  fluctuating hydrodynamics.
In the following, we will focus on this more interesting case.
We refer to~\cite{S2}, Section 7.1 for the general theory of hydrodynamic fluctuations, which was first developed for equilibrium states in \cite{LL6}. The link with the predictions from kinetic theory in the case of dilute gases was discussed in \cite{KCD82} (see also \cite{BB} for a recent contribution).

Let us 
define a  joint process by time rescaling and projecting on non oscillating hydrodynamic modes
the  fluctuation field~$\zeta^\eps_t$ defined in~(\ref{eq: fluctuation field}). According to~(\ref{defutheta}) we   consider, for any pair of test functions~$ (\varphi,\psi)\in C^\infty(\T^d; \R^{d} \times \R) $ with~$ \nabla_x \cdot \varphi  = 0$, the fluctuation field
$$
  \zeta^\eps_{t} \big( \varphi  \cdot v \big)+ \zeta^\eps_{t } \Big( \psi \big(  {|v|^2 \over d+2}-1\big)\Big)\, .
$$
 To simplify the notation, we denote  from now  on the couple of test functions   by
 \begin{equation}
\label{def test function}
\phi = (\varphi,\psi)\in C^\infty(\T^d; \R^{d} \times \R) \, , \,  \quad\nabla_x \cdot \varphi  = 0
\end{equation}
and to recover a  diffusive regime,   time is rescaled as follows:
\begin{equation}
\label{fluct-u-theta}
\begin{aligned}
  \xi^\eps_\tau(\phi) 
&:=  {\mathcal   U}^\eps_\tau(\varphi) +    {\Theta}^\eps_\tau(\psi) 
\\
&:=  \zeta^\eps_{\tau/\alpha} \big( \varphi  \cdot v \big)+ \zeta^\eps_{\tau/\alpha} \Big( \psi \big(  {|v|^2 \over d+2}-1\big)\Big) \,.\end{aligned}
\end{equation}
  We stress the fact  that in contrast with~$ \zeta^\eps$, the test functions in~$\xi^\eps$ only depend on the space variable.
In the limit $\mu_\eps \to \infty$ with $\alpha$ slowly vanishing, we expect the fluctuation fields~$\big(  {\mathcal U}^\eps ,   { \Theta}^\eps  \big)$ to converge in the sense of distributions to~$( {\mathcal U} ,\Theta)$ solving   the fluctuating Stokes-Fourier equations
\begin{equation}
\label{fluct-Stokes}
\begin{cases}
\d_\tau  {\mathcal U} = \nu  \Delta_x   {\mathcal U}  + \sqrt{2 \nu} \, P \, \nabla \cdot  \dot \bbW_t , \\
 \d_\tau \Theta  = \kappa  \Delta_x \Theta  + \sqrt{\frac{4 \kappa}{d+2}} \, \nabla \cdot  \dot W_t \,,
\end{cases}
\end{equation}
where $W_t$ is a space/time white noise taking values in $\bbR^d$ and  $\bbW_t$ is a $d \times d$ matrix with coefficients given by independent white noises. We recall that~$P$ stands for  the Leray projection on divergence free vector fields.
Note that the noise is tuned so that the field has a covariance compatible with the invariance of   \eqref{eq: invariant measure}.
The equations \eqref{fluct-Stokes} should be understood in a weak sense, namely restricting to
  any pair of test functions~$ (\varphi,\psi)\in C^\infty(\T^d; \R^{d} \times \R)$
with~$ \nabla_x \cdot \varphi  = 0$
$$\begin{cases}
{\mathcal U}_\tau (\varphi)  = {\mathcal U}_0 ( e^{\nu \tau \Delta_x} \varphi)  
+ \sqrt{2 \nu} \displaystyle \int_0^\tau d\sigma  \dot \bbW_\sigma \big(   \nabla e^{\nu (\tau-\sigma) \Delta_x}  \,   \varphi  \big)  \\
\Theta_\tau (\psi) = \Theta_0 \big( e^{\kappa \tau\Delta_x} \psi \big) 
+ \sqrt{\frac{4 \kappa}{d+2}} \displaystyle\int_0^\tau d\sigma  \dot W_\sigma \big(   e^{\kappa (\tau-\sigma) \Delta_x} \nabla \psi  \big) \, .
\end{cases}
$$

We stress that  the fluctuations in \eqref{fluct-Stokes} exactly compensate the dissipation according to the fluctuation-dissipation principle.
In particular, both Gaussian processes are characterized by their covariances for $\sigma \leq t$
\begin{equation}
\label{fluct-Stokes covariance}
\begin{cases}
\bbE \Big( {\mathcal U}_\sigma (\varphi_1) {\mathcal U}_\tau (\varphi_2) \Big)  = \displaystyle
\int_{\T^d} dx  \,    \varphi_1 (x) \cdot e^{\nu (\tau-\sigma) \Delta_x}  \varphi_2 (x)\\ 
\bbE \Big( \Theta_\sigma (\psi_1) \Theta_\tau(\psi_2) \Big)  = \displaystyle
\frac{2}{d+2}
\int_{\T^d} dx \,  \psi_1 (x)  \; e^{\kappa (\tau-\sigma) \Delta_x} \psi_2 (x)\, .
\end{cases}
\end{equation}

\bigskip

The main result of this paper is that both limits $\mu_\eps \to \infty$ with $\mu_\eps \eps^{d-1} = \alpha^{-1}$, and $\alpha \to 0$ can   be combined in order to derive fluctuating hydrodynamics directly from the dynamics of particles, thus solving Hilbert's sixth problem  in the particular case of   fluctuations of perfect gases at  equilibrium. 
\begin{Thm}
\label{main-thm}
Consider a system of hard spheres at equilibrium  in a~$d$-dimensional periodic box with~$d\geq 3$, with inverse mean free time $\alpha^{-1}  := \mu_\eps  \eps^{d-1} \leq ( \log \log \log \mu_\eps)$.
 Then, in the diffusive limit~$\mu_\eps \to \infty,  \alpha \to 0$,  the rescaled joint process 
$(   \xi^\eps_\tau )_{\tau \in [0,T]}$ defined in~{\rm(\ref{fluct-u-theta})} converges for any~$T>0$ in law to the solution of the fluctuating Stokes-Fourier equations {\rm(\ref{fluct-Stokes})}.
\end{Thm}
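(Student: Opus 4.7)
The strategy combines two ingredients: the convergence of $\zeta^\eps$ to the solution of the fluctuating Boltzmann equation \eqref{eq: OU}, already established in \cite{BGSS3, BGSS4}, with a Chapman--Enskog analysis of the kinetic stochastic PDE in the fast-relaxation limit $\alpha \to 0$. Because the Wick rule of \cite{BGSS4} forces the limit of all joint moments of $\zeta^\eps$ to be Gaussian, and $\xi^\eps$ is obtained from $\zeta^\eps$ by a bounded linear operation (rescaling and pairing with hydrodynamic test functions), it suffices to prove tightness of $(\xi^\eps)_\eps$ in a suitable path space and to compute the limiting two-point function. If this limit matches \eqref{fluct-Stokes covariance}, then the limit process is uniquely identified as the solution of \eqref{fluct-Stokes}.

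To identify the covariance, I would start from the Duhamel representation for \eqref{eq: OU}. The test function $\phi = \varphi\cdot v + \psi(|v|^2/(d+2)-1)$ lies at each fixed $x$ in the kernel of $\cL$ and, under incompressibility and the Boussinesq relation, projects onto the non-oscillating hydrodynamic modes. A standard Chapman--Enskog ansatz
$$
e^{t(v\cdot\nabla_x - \cL/\alpha)} \phi \; \approx \; \phi \;-\; \alpha\, \cL^{-1}\bigl( v \cdot \nabla_x\phi \;-\; \text{hydrodynamic projection}\bigr) \;+\; O(\alpha^2),
$$
combined with the diffusive rescaling $t = \tau/\alpha$, shows that this backward semigroup projected onto the hydrodynamic kernel converges to $e^{\nu\tau\Delta_x}$ on the momentum sector and to $e^{\kappa\tau\Delta_x}$ on the thermal sector. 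The transport coefficients $\nu$ and $\kappa$ are precisely the Green--Kubo integrals involving $\cL^{-1}$ applied to the momentum and energy fluxes. This handles the contribution of the initial datum to the two-point function.

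The noise contribution is analysed similarly: plugging the Chapman--Enskog ansatz into \eqref{eq: noise covariance} and using that $\Delta\phi = 0$ identically for $\phi$ in the hydrodynamic kernel (since in $d\mu$ one has $x=x_*$ and so $\phi$ reduces to a collision invariant), one finds that the leading term is
$$
\frac{1}{2\alpha}\int_0^{\tau/\alpha}\! ds \int d\mu(z,z_*,\omega) \, \cM(v)\cM(v_*) \,\bigl|\alpha\, \Delta\bigl(\cL^{-1}(v\cdot\nabla_x \phi)\bigr)\bigr|^2 \;+\; o(1).
$$
The prefactor $\alpha^{-1}$, the integration length $\tau/\alpha$ and the factor $\alpha^2$ coming from the Chapman--Enskog correction combine to give a finite limit. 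The standard identity relating the quadratic form $\int d\mu\, \cM\cM_*\, \Delta f\, \Delta g$ to the weighted inner product $\la f, \cL g\ra_{\cM\,dv}$ then delivers exactly the coefficients $2\nu$ (momentum sector) and $4\kappa/(d+2)$ (thermal sector) prescribed by \eqref{fluct-Stokes}. This realises the fluctuation--dissipation principle at the kinetic scale.

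The main obstacle lies in securing the joint limit $\mu_\eps \to \infty$ and $\alpha \to 0$. The cluster expansions and cumulant bounds of \cite{BGSS3, BGSS4} do accommodate a diverging inverse mean free path $\alpha^{-1}$, but the diffusive time horizon $\tau/\alpha$ inflates their remainders by an extra factor $\alpha^{-1}$, and each tree contributing to a cumulant of order $k$ carries its own combinatorial weight that depends on the effective collision time. The triple-logarithmic rate $\alpha^{-1} \leq \log\log\log\mu_\eps$ in the statement is chosen precisely so that these combinatorial estimates, which underlie the Wick rule of \cite{BGSS4}, remain effective on the diffusive time scale; every step of the Chapman--Enskog reduction above must be revisited with that uniformity in mind. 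Once uniformity in $\alpha$ is secured, tightness in $C([0,T]; H^{-s}(\T^d))$ for $s$ large enough follows from a Kolmogorov-type criterion applied to the two-point function, combined with the uniform higher-moment bounds supplied by the Wick rule.
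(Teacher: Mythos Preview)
Your overall architecture (Gaussianity via Wick, identification of the covariance, tightness) matches the paper's, but two of your steps contain real gaps and your route to the covariance differs from the paper's in an instructive way.

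\medskip
\textbf{The unbounded test functions.} You write that $\xi^\eps$ is obtained from $\zeta^\eps$ by a ``bounded linear operation''. It is not: the hydrodynamic test functions $\varphi\cdot v$ and $\psi(|v|^2/(d+2)-1)$ are unbounded in $v$, so neither Theorem~\ref{thmCov} (which needs $W^{1,\infty}$ test functions) nor Theorem~\ref{thmTCL} (which needs $L^\infty$) applies directly. The paper handles this by introducing a velocity cutoff $\chi(|v|^2/R)$ with $R=\alpha^{-1}$, defining a truncated field $\bar\xi^\eps$, and proving via equilibrium moment bounds that $\xi^\eps-\bar\xi^\eps$ is $O(e^{-R/4})$ in all $L^q$. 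The bounds from \cite{BGSS3,BGSS4} are then applied to $\bar\xi^\eps$, at the price of extra powers of $R$ in the error terms; these are absorbed only because of the triple-logarithmic growth of $\alpha^{-1}$. Your plan skips this entirely.

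\medskip
\textbf{Tightness.} A Kolmogorov criterion in $C([0,T];H^{-s})$ is not available: the microscopic process has jumps at collisions, so $\xi^\eps$ lives in the Skorokhod space $D([0,T];\bbH^{-k})$. The paper uses a modified Garsia--Rodemich--Rumsey inequality that tolerates jumps, together with a two-scale decomposition: for increments $|\tau-\sigma|\ge a$ with $a=(\log\log\mu_\eps)^{-1/10}$, the sixth moment is controlled via the Wick factorisation \emph{and} a $C^{1/2}_\tau(\bbH^{-2}_x)$ regularity estimate on the linearised Boltzmann semigroup; for increments $|\tau-\sigma|\le 2a\ll\alpha^2$, one reduces by stationarity to kinetic-time estimates from \cite{BGSS2}, after a second (larger) velocity cutoff $\tilde R=(\log\mu_\eps)^2$ justified by a cluster argument. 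None of this is captured by ``Kolmogorov applied to the two-point function''.

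\medskip
\textbf{Covariance: a different route.} Your Chapman--Enskog analysis of the \emph{stochastic} equation \eqref{eq: OU}, separating the semigroup and noise contributions, is a legitimate heuristic but is not what the paper does. The paper never studies the SPDE. Instead, Theorem~\ref{thmCov} says directly that $\bbE_\eps[\zeta^\eps_0(g^0)\zeta^\eps_{\tau/\alpha}(h)]$ is approximated by $\int \cM\, g_\alpha(\tau/\alpha)\,h$, where $g_\alpha$ solves the \emph{deterministic} linearised Boltzmann equation. The hydrodynamic limit of this deterministic object is then carried out by the standard moment-equation argument (energy inequality, coercivity of $\cL$, Fredholm inversion of $A,B$), yielding the Stokes--Fourier covariance \eqref{fluct-Stokes covariance}. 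Since the limit is Gaussian, the covariance alone identifies it; the noise structure of \eqref{fluct-Stokes} is never computed separately but read off from the covariance via fluctuation--dissipation. Your approach would have to make the SPDE limit rigorous uniformly in $\alpha$, which is harder and unnecessary here.
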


Recall that the microscopic dynamics is completely deterministic, so that stochasticity comes just as a consequence of the sensitivity of the particle system to the microscopic details of the initial configuration. 
In the low density regime, since the modulus of continuity  of  trajectories with respect to the initial configurations depends strongly on $\eps$, there is a strong  instability as~$\eps \to 0$, which generates some  ``spontaneous stochasticity"  encoded by the white noise in \eqref{eq: OU}.   The instability of the microscopic dynamics thus plays a key role in the structure of the noise in Theorem \ref{main-thm}. 
At variance, for one dimensional integrable systems, one expects that the dominant contribution is the transport of the initial fluctuations with some additional random shift in the large scale limit, this was pointed out recently in \cite{Ferrari_Olla} for the the hard rod system (see also \cite{Boldrighini_Wick}).  The white noise  in \eqref{eq: OU} preserves locally the hydrodynamic modes, however at diffusive time scales, 
it ultimately induces the local noise on the hydrodynamic projections
 \eqref{fluct-Stokes}.
Note that a   spontaneous generation of noise also holds for the diffusive limits of a tagged particle 
to a Brownian motion 
in an equilibrium hard sphere gas  \cite{BGSR1, BGSR3} (see also \cite{Er12} in the quantum case).


\section{The fluctuation field in the low density limit: state of the art, and strategy of proof}

The present paper relies on the  ``weak convergence" approach devised in \cite{BGSS3, BGSS4}
in order to prove the convergence of the fluctuation field to the solution of the fluctuating Boltzmann equation~\eqref{eq: OU}.
The proofs of~\cite{BGSS3,BGSS4} are quantitative, and 
 the  important parameter is the number of collisions, which is proportional to the observation time and inversely proportional to the mean free time $\alpha$. 
Thus, the (diffusive) observation time~$T/\alpha $ and the parameter~$\alpha^{-1}$ can  be chosen slowly diverging  with~$\mu_\eps$, for instance as~$O(\log\log \log \mu_\eps)$.  
This will allow us to reach the diffusive regime described in Section \ref{sec: Fluctuating hydrodynamics}.
In the rest of this section, we gather the results of~\cite{BGSS3, BGSS4} we shall be using here. We refer to those papers for proofs --- see also~\cite{BGSS5} for an overview.

For the sake of clarity, we will use the following notations for 
the different time scales described in the previous section~: 
\begin{equation}
\label{eq: time scale}
\begin{aligned}
&\text{kinetic scale : } t =  \alpha t_{kin} \hbox{ with } t_{kin} = O(1) , \quad 
\text{acoustic scale : }   t = O(1) , \\
& 
\text{diffusive scale : }  t = \tau  / \alpha \hbox{ with } \tau = O(1). 
\end{aligned}
\end{equation}

\subsection{Convergence of the covariance for diffusive times}

In the analysis of the fluctuation field for diffusive times, the  first step  is to study the asymptotic behaviour  of the time-rescaled covariance
\begin{equation}
\label{cov-def}
\Cov_\eps \left(\frac\tau\alpha ,g^0,h\right) := \bbE_\eps \Big(  \zeta^\eps_{0} (g^0)  \, \zeta^\eps_{\tau} (h)  \Big)
\end{equation}
 as $\mu_\eps \to \infty$, $\mu_\eps \eps^{d-1} = \alpha^{-1}$. 
 The following result states that  this covariance   is well approximated on $\R^+$ by $\displaystyle \int \mathcal M g_\alpha (\frac \tau \alpha ) h dxdv$ where $g_\alpha $ is the solution of the scaled  linearized Boltzmann equation~(\ref{eq:scaled- lbeq}) starting from $g^0\in L^2_\cM$, defined by the norm
\begin{equation}
\label{eq: L2 M}
 \|g\|_{L^2_\cM}:= \Big( \int_{ \T^d\times \R^d} |g|^2 \, \cM dxdv\Big)^\frac12
  \, .
\end{equation}
%
 
   \begin{Thm}[\cite{BGSS3}, {\bf Linearized Boltzmann equation}]
   \label{thmCov}
Consider a system of hard spheres at equilibrium  in a~$d$-dimensional periodic box with~$d\geq 3$.  Let~$g^0$ and~$h$ be two Lipschitz functions on $\T^d \times \R^d$ and let~$g_\alpha$ be the  unique solution in~$L^\infty(\R^+; L^2_{\mathcal M})$ to~{\rm(\ref{eq:scaled- lbeq})} associated with the initial data~$g^0$.
Then, in the low density regime~$\mu_\eps \to \infty$, $\mu_\eps \eps^{d-1} = \alpha^{-1} \leq  \log \log \log \mu_\eps $, the covariance of the fluctuation field~$\left(\zeta^\eps_{\tau/\alpha} \right)_{\tau\geq 0}$ defined by~{\rm(\ref{cov-def})} satisfies the following estimate: for any~$ T>0$  such that $(T/\alpha^2 )  \ll (\log   \log \mu_\eps )^\frac14$,
\begin{equation}
\label{convergence-rate}
\begin{aligned}
\sup_{\tau\in [0,T ]} & \left| \Cov_\eps \left(\frac \tau\alpha ,g^0,h\right) - \int  g_\alpha  (\frac \tau\alpha ) h\, \cM dxdv\right|\\
 & \leq C\| h\|_{W^{1,\infty}} \| g^0\|_{W^{1, \infty}}
 \left(\frac{C T   } {\alpha^2} \right)^{3/2} 
(\log   \log \mu_\eps )^{-1/4}  \,.
 \end{aligned}
\end{equation}
\end{Thm}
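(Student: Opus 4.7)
The plan is to compare $\Cov_\eps(\tau/\alpha, g^0, h)$ with $\int g_\alpha(\tau/\alpha) \, h \, \cM\, dxdv$ via a dynamical cluster expansion of the correlation functions, combined with a pseudo-trajectory representation of the hard-sphere BBGKY dynamics. By stationarity of the grand canonical equilibrium and exchangeability of the particles, the covariance can be recast as a duality pairing between $g^0$, the test function $h$, and the two-time, two-point correlation of the equilibrium. After using the $\mu_\eps^{-1/2}$ normalization of the fluctuation field to isolate the connected part, the leading contribution takes the form $\int F^\eps_2(\tau/\alpha)(z_1,z_2) \, g^0(z_1)\,h(z_2) \, \cM(v_1) \cM(v_2)\, dz_1 dz_2$, where $F^\eps_2$ is governed by the linearized BBGKY hierarchy around $\cM$.

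I would then expand $F^\eps_2$ through the standard iterated Duhamel series, each term being an integral over a backward collision tree: a particle added to the tree either performs a genuine Boltzmann-like pre-collisional branching, or a recollision/overlap with an existing branch. The target $g_\alpha(\tau/\alpha)$ solving~\eqref{eq:scaled- lbeq} admits an analogous tree expansion with only Boltzmann branchings. Subtracting the two expansions term by term leaves two types of error: truncated Duhamel tails beyond some depth $K$, and tree contributions with at least one recollision or overlap.

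The truncation depth is chosen as $K \sim (\log\log\mu_\eps)^{1/4}$; contractivity estimates for $\cL$ together with the $W^{1,\infty}$ regularity of $g^0,h$ bound the tail by $O(2^{-K})$. Recollisions are controlled by the standard geometric lemma estimating the measure of initial data producing a scattering event forbidden in the Boltzmann picture; integrating against the Gaussian weight and summing over trees of depth $\le K$ yields a contribution of order $\eps^{\eta} (T/\alpha)^{CK}$. The hypothesis $\alpha^{-1} \leq \log\log\log\mu_\eps$ is tailored precisely so that the Boltzmann-Grad factor $\eps^\eta$ absorbs the combinatorial explosion $(T/\alpha)^{CK}$ while leaving the stated quantitative rate $(T/\alpha^2)^{3/2}(\log\log\mu_\eps)^{-1/4}$; the cubic power in $T/\alpha^2$ arises from a union bound over pairs of independent sub-trees attached to the two test functions.

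The principal obstacle is the geometric control of recollisions over the long interval $[0,T/\alpha]$, which vastly exceeds the mean free time. One must organise the tree so that any recollision involves branches born at sufficiently separated times, and then exploit the scattering dispersion against the Maxwellian background to extract a quantitative $\eps$-smallness. This sampling-vs-recollision trade-off, already carried out in~\cite{BGSS3}, is what dictates both the slow-divergence assumption on $\alpha^{-1}$ and the power $3/2$ in the convergence rate.
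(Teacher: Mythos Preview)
This theorem is not proved in the present paper: it is quoted verbatim from~\cite{BGSS3}, and the text immediately preceding it states explicitly that ``we gather the results of~\cite{BGSS3,BGSS4} we shall be using here. We refer to those papers for proofs.'' There is therefore no proof here to compare your proposal against; the paper uses Theorem~\ref{thmCov} as a black box input to derive the hydrodynamic limit.

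That said, your sketch is broadly in the spirit of the argument carried out in~\cite{BGSS3}: the covariance is indeed reduced to a two-point correlation, expanded via an iterated Duhamel/pseudo-trajectory representation, and compared term by term with the Boltzmann tree expansion, with recollisions providing the error. A few points, however, are not quite right as stated. The truncation of the Duhamel series is not controlled by ``contractivity estimates for~$\cL$'' giving~$O(2^{-K})$; rather one uses a time-sampling decomposition of the interval~$[0,\tau/\alpha]$ into many subintervals and exploits the fact that trees with too many branchings on any given subinterval are combinatorially rare against the Maxwellian weight. Your explanation of the exponent~$3/2$ as coming from ``a union bound over pairs of independent sub-trees'' is also not the mechanism: the rate reflects the global cost of the sampling/recollision trade-off over the long time~$\tau/\alpha$, not a pairing argument. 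Finally, the control of recollisions over times much longer than the mean free time requires more than the ``standard geometric lemma'': one needs the clustering structure of~\cite{BGSS3} to prevent the combinatorics from overwhelming the geometric smallness. Your last paragraph acknowledges this difficulty but does not resolve it; the resolution is precisely the content of~\cite{BGSS3}.
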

\begin{Rmk} 
In accordance with the diffusive scaling,
this estimate depends on $T/\alpha^2$, which is the ratio between the observation time $T/\alpha$ and the mean free time $\alpha$.
\end{Rmk}

\subsection{Convergence of higher order moments for diffusive times}

The next step is to prove that the   process $ \zeta^\eps_{\tau/\alpha}$  is asymptotically Gaussian when $\mu_\eps \to \infty$ and~$\mu_\eps \eps^{d-1} =\alpha^{-1} \to \infty$. This
boils down to showing that the moments are determined by the covariances according to Wick's rule
\begin{equation} 
\label{eq: appariement} 
\lim_{\mu_\eps \to \infty\atop \alpha  \to 0 } \Big| \bbE_\eps \Big[  \zeta^\eps_{\tau_1/\alpha} (h_1) \;  \dots 
\;   \zeta^\eps_{\tau_p/\alpha} (h_p) \Big] 
-  \sum_{\eta \in {\mathfrak S}_p^{\text{pairs}}}
\ \prod_{\{i,j\}  \in \eta} \bbE_\eps \Big[ \zeta^\eps_{\tau_i/\alpha} (h_i)  \; 
 \zeta^\eps_{\tau_j/\alpha} (h_j)  \Big] \Big| = 0\,  ,
\end{equation}
uniformly in $\tau_1, \dots , \tau_p \in [0, T]$, 
where ${\mathfrak S}_p^{\text{pairs}}$ is the set of partitions of $\{1, \dots , p \}$ made only of pairs. 
Notice that if $p$ is odd then ${\mathfrak S}_p^{\text{pairs}}$ is empty and the product of the moments is asymptotically 0.

\begin{Thm} [\cite{BGSS4}, {\bf Gaussian process}]
\label{thmTCL}
Consider a system of hard spheres at equilibrium  in a~$d$-dimensional periodic box with~$d\geq 3$.  Let~$(h_i)_{1 \leq i \leq p}$ be a family of~$p$ bounded functions on~$\T^d \times \R^d $.
Then, in the low density regime~$\mu_\eps \to \infty$, $\mu_\eps \eps^{d-1} = \alpha^{-1} \leq  \log \log \log \mu_\eps $, the  fluctuation field~$\left(\zeta^\eps_{\tau/\alpha} \right)_{\tau \geq 0}$ defined by~{\rm(\ref{eq: fluctuation field})} is almost Gaussian  in the sense that  for any~$T>0 $ satisfying~$(T/\alpha^2) ^{\frac{2p-1}2}      \ll (\log   \log \mu_\eps )^\frac14$, there holds uniformly in~$\tau_1, \dots \tau_p \in [0, T]$, 
\begin{equation}
\label{convergence-Wick}
\begin{aligned}
 &\left| \bbE_\eps \Big[ \zeta^\eps_{\tau_1/\alpha} (h_1)  \;  \dots 
\; \zeta^\eps_{\tau_p/\alpha} (h_p) \Big] 
-  \sum_{\eta \in {\mathfrak S}_p^{\text{pairs}}}
\ \prod_{\{i,j\}  \in \eta} \bbE_\eps \left[  \zeta^\eps_{\tau_i/\alpha} (h_i)  \; 
 \zeta^\eps_{\tau_j/\alpha} (h_j)  \right] \right| \\
&\qquad \qquad \qquad \qquad 
\leq  \Big( \prod_{i = 1} ^p \| h_i \|_{L^\infty} \Big)   \Big(\frac {CT} {\alpha^2}\Big) ^{(2p-1)/2} (\log   \log \mu_\eps )^{-1/4}  \, . \end{aligned}
\end{equation}
\end{Thm}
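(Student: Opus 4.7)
The plan is to reduce the Wick-factorization bound to a quantitative vanishing of the dynamical cumulants of order three or more. Using the moment-cumulant inversion, let $\kappa^\eps_{q}(h_{i_1}, \ldots, h_{i_q})$ denote the joint cumulant of the fluctuation fields $\zeta^\eps_{\tau_{i_k}/\alpha}(h_{i_k})$. Then the left-hand side of~(\ref{convergence-Wick}) is exactly the sum, over all set-partitions of~$\{1, \ldots, p\}$ that contain at least one block of size different from~$2$, of products of such cumulants. Blocks of size~$1$ vanish by centering, so each surviving term involves a factor $\kappa^\eps_q$ with $q \geq 3$. It therefore suffices to establish, for each~$q \geq 3$, a quantitative bound on $|\kappa^\eps_q|$ which, once summed against the combinatorics of the remaining pair blocks and combined with the pair-covariance asymptotics delivered by Theorem~\ref{thmCov}, reproduces the error rate $(CT/\alpha^2)^{(2p-1)/2}(\log\log\mu_\eps)^{-1/4}$ in~(\ref{convergence-Wick}).

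To access the cumulants I would use the BBGKY/Duhamel expansion of the dynamics starting from the grand-canonical equilibrium~(\ref{eq: initial measure}). As in~\cite{BGSS3}, each factor $\zeta^\eps_{\tau_i/\alpha}(h_i)$ is rewritten, via duality with the hard-sphere flow, as an integral over backward \emph{collision trees} rooted at the test function~$h_i$ observed at time~$\tau_i/\alpha$: at each backward collision, a particle gives birth to an extra fictitious one, producing a random tree structure weighted by the scattering kernel and against~$\cM$. The essential structural input — exploiting the Poisson-like form of the equilibrium measure together with the mean-subtraction built into $\zeta^\eps$ — is that the joint expectation of the product of $p$ such fields admits a connected/cluster expansion indexed by the pattern of \emph{external recollisions} among the $p$ trees. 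The contributions in which the trees arrange into disjoint pairs, each pair sharing a single external recollision, reproduce the product of pair covariances on the right-hand side of~(\ref{convergence-Wick}); the contributions in which $q\geq 3$ trees are tied together into a single cluster are precisely the cumulants $\kappa^\eps_q$ to be controlled.

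The main obstacle is the quantitative estimation of the geometric constraints that link three or more trees. A single external link forces two particles from distinct trees to come within distance~$\eps$ at some collision instant, yielding a factor of order~$\eps^{d-1}$; combined with the normalization $\mu_\eps^{-p/2}$ coming from~(\ref{eq: fluctuation field}) and the $\mu_\eps^q$ counting of tree roots, each link beyond the minimum needed to connect a cluster gains a power of $\mu_\eps^{-1/2}$, equivalently a factor $\sqrt{\alpha/\mu_\eps}$ after using $\mu_\eps\eps^{d-1} = \alpha^{-1}$. The real difficulty is the combinatorial explosion accompanying this gain: each tree can have of order $T/\alpha^2$ collisions, so enumerating tree topologies, positions of external links, and time orderings produces factorial factors in the number of branches. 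Following the strategy of~\cite{BGSS4}, I would (i)~truncate the Duhamel series at a collision number tuned to the diffusive time, (ii)~control the truncated tail by $L^\infty$ stability of pseudo-trajectories under small perturbations of the initial data, and (iii)~absorb the combinatorial factor against the per-link smallness. The regime $(T/\alpha^2)^{(2p-1)/2} \ll (\log\log\mu_\eps)^{1/4}$ is precisely the balance point at which this trade-off leaves a remainder of size $(\log\log\mu_\eps)^{-1/4}$, which is both the bottleneck of the argument and the origin of the error term in~(\ref{convergence-Wick}).
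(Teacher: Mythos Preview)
The paper does not contain a proof of this theorem: it is quoted from~\cite{BGSS4} and used as a black box (see the paragraph preceding Theorems~\ref{thmCov} and~\ref{thmTCL}, ``We refer to those papers for proofs''). There is therefore no proof in the present paper to compare your proposal against.

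That said, your sketch is a faithful high-level summary of the strategy actually carried out in~\cite{BGSS4}: reduction to cumulants of order at least three via the moment--cumulant formula, representation of each factor of the fluctuation field through backward collision trees (pseudo-trajectories), a cluster expansion of the joint moment according to which trees are dynamically linked, and finally the trade-off between the geometric smallness of each extra link and the combinatorial growth of collision histories over a diffusive time interval. Two corrections are in order. First, you do not need the \emph{asymptotics} of the pair covariance from Theorem~\ref{thmCov} (which moreover assumes Lipschitz, not merely bounded, test functions): what enters the error term on the right-hand side of~\eqref{convergence-Wick} are uniform \emph{bounds} on the pair covariances, available directly from equilibrium moment estimates such as~\eqref{eq: moment ordre pR}. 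Second, your description of the pairing contribution as ``each pair sharing a single external recollision'' is too restrictive; the pair covariance is itself a full sum over all dynamical correlations between two trees, and the relevant point is only that the clustering is into pairs, not that each pair is linked by a single event. Beyond these inaccuracies the outline is correct, but it compresses what in~\cite{BGSS4} is a long argument: the bulk of the work lies in the iterated time sampling with superexponential branching cutoffs and in the control of pathological (recolliding, overlapping) pseudo-trajectories, which your item~(ii) only gestures at.
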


\subsection{Tightness in the kinetic regime}

Finally for processes which depend on a continuous variable (the time  variable in our setting), the convergence of time marginals is not enough to characterize the convergence in law:  possible oscillations with respect to time need to be under control (see \cite[Theorem 13.2 page 139]{Billingsley}). For the fluctuation field~$\zeta^\eps$, this tightness property has been obtained  for short kinetic times, but actually since the equilibrium measure is invariant under the dynamics, a union bound  provides the tightness on any finite kinetic time, i.e.  times of order~$O(\alpha)$.

For times much longer than  kinetic times, we actually do not expect the process $\zeta^\eps_t $ to be tight. Since the covariance $\Cov_\eps \left(t ,g^0,h\right)$ is close to the solution  of the scaled  linearized Boltzmann equation {\rm(\ref{eq:scaled- lbeq})}, we expect to see a fast relaxation process with rate $O(\frac1\alpha)$, meaning that only the hydrodynamic part of $g_\alpha$ can be compact for $t= O(1)$.
Going to diffusive times $ t = \tau /\alpha$, we also expect to have acoustic waves producing fast oscillations, meaning that only the non oscillating  hydrodynamic  part of $g_\alpha$ can be compact for $\tau = O(1)$.
Nevertheless, after projecting on the non oscillating modes,
we are going to show in Section \ref{tightness} that the process $( \xi^\eps_\tau)_{\tau \geq 0}$ defined by  \eqref{fluct-u-theta} is tight on the diffusive scale.

\subsection{Strategy of the proof of Theorem~\ref{main-thm}}

In view of deriving   fluctuating hydrodynamic equations and proving Theorem~\ref{main-thm}, the strategy is now straightforward: we   consider the rescaled  fluctuation field $  \xi^\eps_{\tau }$ projected on hydrodynamic, non oscillating modes (recall~(\ref{fluct-u-theta})), and  check that with such test functions and  this scaling in time, Gaussianity (Theorem~\ref{thmTCL})  and tightness   still hold, and that the   covariance asymptotically converges to the solution to the Stokes-Fourier equation. 
Note that the projection~(\ref{fluct-u-theta}) leads to considering test functions which are  unbounded in $v$ and therefore   there are some   technical issues when applying Theorems~\ref{thmCov} and~\ref{thmTCL}.   These 	are dealt with in Section~\ref{cov}, thanks to a cut-off in energies introduced in Section~\ref{truncation}.  The tightness of the process on the diffusive time-scale  is derived in Section~\ref{tightness}.



\section{Finite time marginals}
\label{Hydro limits}

In this section, we are going to characterize the limiting law of the process by proving the following result.  We set from now on$$  \alpha^{-1}   =  \log \log \log \mu_\eps\, .$$
\begin{Prop}
\label{Prop: marginales temps}
For arbitrary times $\tau_1, \dots, \tau_L$ and test functions $\phi^{(1)} = (\varphi^{(1)}, \psi^{(1)}),\dots$ and~$\phi^{(L)} = (\varphi^{(L)}, \psi^{(L)})$ chosen as in \eqref{def test function}, the time marginals $\big( \xi^{\eps}_{\tau_\ell} \big( \phi^{(\ell)} \big)  \big)_{\ell \leq L}$ converge in law to the limiting process
$\big(  {\mathcal U}_{\tau_\ell }  (\varphi^{(\ell)}),   { \Theta}_{\tau_\ell} (\psi^{(\ell)})  \big)_{\ell \leq L}$ as $\mu_\eps$ tends to infinity.
\end{Prop}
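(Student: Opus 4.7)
The plan is to argue by the Cramér-Wold device: for any real $\lambda_1,\dots,\lambda_L$, it suffices to show that
\[ S^\eps := \sum_{\ell=1}^L \lambda_\ell\, \zeta^\eps_{\tau_\ell/\alpha}\bigl(h^{(\ell)}\bigr), \qquad h^{(\ell)}(x,v) := \varphi^{(\ell)}(x)\cdot v + \psi^{(\ell)}(x)\bigl(\tfrac{|v|^2}{d+2} - 1\bigr), \]
converges in law to the centered Gaussian variable $S := \sum_\ell \lambda_\ell\bigl(\mathcal U_{\tau_\ell}(\varphi^{(\ell)}) + \Theta_{\tau_\ell}(\psi^{(\ell)})\bigr)$. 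Since $S$ is a centered Gaussian whose variance is determined by~\eqref{fluct-Stokes covariance}, this reduces to (i) the method of moments, i.e.\ $\bbE_\eps(S^\eps)^p \to \bbE(S^p)$ for every $p$, together with (ii) the exact identification of $\lim_\eps \bbE_\eps(S^\eps)^2$ as the Stokes-Fourier variance.

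\textbf{Velocity truncation.} The functions $h^{(\ell)}$ grow polynomially in $v$, whereas Theorems~\ref{thmCov} and~\ref{thmTCL} are stated for bounded/Lipschitz data. As announced in Section~\ref{truncation}, I would introduce a smooth cutoff $\chi_K$ with $\chi_K \equiv 1$ on $\{|v|\leq K\}$ and supported in $\{|v|\leq 2K\}$, and decompose $h^{(\ell)} = h^{(\ell)}_K + r^{(\ell)}_K$. By invariance of the equilibrium measure and a direct second moment computation,
\begin{equation*}
 \bbE_\eps\bigl|\zeta^\eps_t\bigl(r^{(\ell)}_K\bigr)\bigr|^2 \leq \int \bigl(r^{(\ell)}_K\bigr)^2 \cM \, dxdv = O\bigl(K^{d+4}\, e^{-K^2/8}\bigr),
\end{equation*}
uniformly in $t$; the same bound holds for the Stokes-Fourier side. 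Choosing $K = K(\mu_\eps) \to \infty$ slowly (e.g.\ $K \sim (\log \mu_\eps)^{1/4}$) kills both remainders in $L^2$.

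\textbf{Gaussianity and covariance.} For the truncated data, Theorem~\ref{thmTCL} applied with $\alpha^{-1} = \log\log\log\mu_\eps$ gives, for each fixed $p$,
\[ \bbE_\eps\bigl[\zeta^\eps_{\tau_{i_1}/\alpha}(h^{(i_1)}_K)\cdots \zeta^\eps_{\tau_{i_p}/\alpha}(h^{(i_p)}_K)\bigr] = \sum_{\eta \in \mathfrak S_p^{\text{pairs}}} \prod_{\{i,j\}\in\eta} \bbE_\eps\bigl[\zeta^\eps_{\tau_i/\alpha}(h^{(i)}_K)\zeta^\eps_{\tau_j/\alpha}(h^{(j)}_K)\bigr] + o(1), \]
so expanding $(S^\eps)^p$ by multilinearity yields the desired Gaussian moments up to a vanishing error. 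For the pair covariances, by stationarity of the equilibrium measure one reduces to the form~\eqref{cov-def}, and Theorem~\ref{thmCov} identifies the limit with $\int h^{(j)}_K\, g^{(i)}_{K,\alpha}\bigl((\tau_j-\tau_i)/\alpha\bigr)\, \cM\, dxdv$, where $g^{(i)}_{K,\alpha}$ solves the scaled linearized Boltzmann equation~\eqref{eq:scaled- lbeq} with initial datum $h^{(i)}_K$. The hydrodynamic limit recalled in Section~1.3 applies cleanly because the initial datum $h^{(i)}$ lies in the non-oscillating hydrodynamic kernel (by $\nabla_x\cdot\varphi^{(i)} = 0$ and the absence of a density component): letting first $\eps \to 0$ and then $K \to \infty$ gives~\eqref{defutheta} with $(U^{(i)}, \Theta^{(i)})$ solving~\eqref{Stokes} from $(\varphi^{(i)}, \psi^{(i)})$. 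Computing the Gaussian velocity integrals and using that the $u$ and $\theta$ Stokes-Fourier drivers are independent reproduces precisely the covariance~\eqref{fluct-Stokes covariance}.

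\textbf{Main obstacle.} The delicate point is the simultaneous choice of $K$ and the quantitative bookkeeping across three coupled small parameters: $K$ must be large enough to absorb the Gaussian tail $e^{-K^2/8}$ and to render the initial datum arbitrarily close to the hydrodynamic projection, yet small enough so that the $\|h_K\|_{W^{1,\infty}}$-dependent prefactors in~\eqref{convergence-rate} and~\eqref{convergence-Wick}---which are powers of $K$---do not overwhelm the $(\log\log\mu_\eps)^{-1/4}$ gain under the slow growth $\alpha^{-1}\leq\log\log\log\mu_\eps$. Equally delicate is obtaining a quantitative hydrodynamic limit $g_\alpha(\tau/\alpha) \to U\cdot v + \Theta(|v|^2-(d+2))/2$ for the scaled linearized Boltzmann equation on non-oscillating initial data, with an error that is uniform enough in $\alpha$ to be threaded through the above bounds.
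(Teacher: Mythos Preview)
Your strategy is essentially the paper's: introduce a velocity cutoff, apply Theorem~\ref{thmTCL} to obtain Wick factorisation for the truncated field, apply Theorem~\ref{thmCov} to reduce each pair covariance to the scaled linearized Boltzmann solution, take the hydrodynamic limit of that solution (using that the test functions are non-oscillating hydrodynamic modes), and finally remove the cutoff via an $L^q_\cM$ tail estimate. The paper packages this as Lemma~\ref{Lem: cut-off approximation} plus Proposition~\ref{Prop: marginales temps cut-off}; your Cram\'er--Wold/method-of-moments formulation is equivalent.

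One concrete correction: your suggested cutoff $K\sim(\log\mu_\eps)^{1/4}$ is far too large. In the Wick estimate~\eqref{convergence-Wick} the error carries a factor $\prod_i\|h_i\|_{L^\infty}$, and $\|h_K\|_{L^\infty}\sim K^2$, so the $p$-th moment error picks up $K^{2p}\sim(\log\mu_\eps)^{p/2}$, which is not killed by the $(\log\log\mu_\eps)^{-1/4}$ gain. The paper's choice is to cut at $|v|^2\le R$ with $R=\alpha^{-1}=\log\log\log\mu_\eps$, so that $\|h_R\|_{L^\infty}\lesssim R$ and the error $R^p\,(CT/\alpha^2)^{(2p-1)/2}(\log\log\mu_\eps)^{-1/4}$ is polynomial in $\log\log\log\mu_\eps$ divided by a power of $\log\log\mu_\eps$, hence vanishes; meanwhile the tail is $e^{-R/4}$, still negligible. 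With this adjustment your argument goes through. The hydrodynamic limit step you call ``clean'' is in fact the bulk of the paper's work in Step~1 (energy inequality, coercivity of $\cL$, moment equations, incompressibility/Boussinesq constraints), but since it is classical your citation of Section~1.3 is acceptable.
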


\subsection{Truncated hydrodynamic fields}
\label{truncation}

To prove that the limit is Gaussian, Theorem \ref{thmTCL} cannot be used directly with the process 
$(\xi^{\eps}_\tau)_{t \geq 0}$ as the test functions are unbounded in $L^\infty$ due to divergences in the velocities. Thus an intermediate cut-off process needs to be introduced.
Let us fix an energy cut-off~$R\gg 1$ to be determined (see~(\ref{scaling}) below).  Recalling~(\ref{fluct-u-theta}), we define the modified joint process~$  \bar\xi^{\eps}_\tau$ as follows. For any test function~$\phi$  as in~(\ref{def test function}), we set
\begin{equation}
\label{u-theta-defR}
\begin{aligned}
   \bar\xi^{\eps}_{\tau} (\phi)
:=  \zeta^\eps_{\tau/\alpha} \Big( \chi \Big({|v|^2 \over  R}\Big)
 \varphi \cdot v \Big)+   \zeta^\eps_{\tau/\alpha} \Big(  \chi \Big({|v|^2 \over  R}\Big) \psi \big(  {|v|^2 \over d+2}-1\big)\Big) \, ,
 \end{aligned}
\end{equation}
where $\chi$ is a smooth cut-off function with compact support
$$ \chi_{|[0,1]} \equiv 1, \qquad \chi_{|[2,+\infty[} \equiv 0\,.$$
We choose    $R$ depending on $\e$ and converging to $\infty$ as  $\mu_\eps \to \infty$ as follows
\begin{equation}
\label{scaling} 
R =  \alpha^{-1}   =  \log \log \log \mu_\eps\,.
\end{equation}
Note that the  test functions 
$$ \bar h:=  \Big( \varphi \cdot v + \psi   \big(  {|v|^2 \over d+2}-1\big)\Big) \chi \left({|v|^2 \over  R}\right)$$
  are smooth and bounded thanks to the cut-off in $v$:
\begin{equation}
\label{h-bound}
 \| \bar h\|_{W^{1,\infty}_{x,v} }  \leq CR^2 (   \| \varphi\|_{W^{1,\infty}_x} +\| \psi\|_{W^{1,\infty}_x} ) \,.
 \end{equation}
The process  $\bar\xi^{\eps}_\tau$ is a good approximation of $\xi^{\eps}_\tau$ when $R\to \infty$. 
\begin{Lem}
\label{Lem: cut-off approximation}
Setting $ \xi^{\eps,>}_{\tau} :=  \xi^{\eps}_\tau -   \bar\xi^{\eps}_{\tau}$ then
for all~$1 \leq q < \infty$ and for~$\eps$ small enough
\begin{equation}
\label{cutoffxi}  
\bbE_\eps  \left[  \left(  \xi^{\eps,>}_{\tau}  (\phi) \right)^q \right] \leq C_q \| \phi \|^q_{L^q (\T^d )} e^{-R/4} 
\,,
\end{equation}
 with $L^q_{\cM}$ defined as in  \eqref{eq: L2 M}
Furthermore, one has also 
\begin{equation}
\label{eq: borne Lq cutoffxi}  
\bbE_\eps  \left[  \left( \xi^{\eps}_\tau  (\phi) \right)^q \right] \leq C_q \| \phi \|^q_{L^q (\T^d )}
\quad \text{and} \quad 
\bbE_\eps  \left[  \left( \bar\xi^{\eps}_{\tau}  (\phi) \right)^q \right] \leq C_q \| \phi \|^q_{L^q (\T^d )}.
\end{equation}
\end{Lem}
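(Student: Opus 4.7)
The plan is to reduce everything to time $\tau=0$ by stationarity, and then to establish a general moment bound on the equilibrium fluctuation field which will apply to all three estimates with the appropriate choice of test function. Indeed, by definition,
\begin{equation*}
\xi^{\eps}_\tau(\phi) = \zeta^\eps_{\tau/\alpha}(h),\qquad
\bar\xi^{\eps}_\tau(\phi) = \zeta^\eps_{\tau/\alpha}(\bar h),\qquad
\xi^{\eps,>}_\tau(\phi) = \zeta^\eps_{\tau/\alpha}(h^>),
\end{equation*}
where
\begin{equation*}
h(z):=\varphi(x)\cdot v+\psi(x)\Big(\tfrac{|v|^2}{d+2}-1\Big),\qquad \bar h(z):=h(z)\chi(|v|^2/R),\qquad h^>:=h-\bar h\,.
\end{equation*}
Since the grand canonical measure \eqref{eq: initial measure} is invariant under the hard-sphere flow \eqref{hardspheres}--\eqref{defZ'nij}, each of these random variables has the same law as at $\tau=0$, and one may assume $\tau=0$ throughout.

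The core step is the following moment estimate: for any integer $q\geq 1$ and any measurable $f:\T^d\times\R^d\to\R$,
\begin{equation*}
\bbE_\eps\bigl[|\zeta^\eps_0(f)|^q\bigr]\leq C_q\sum_\pi\prod_{B\in\pi}\mu_\eps^{1-|B|/2}\int_{\T^d\times\R^d}|f|^{|B|}\,\cM\,dz,
\end{equation*}
where $\pi$ ranges over set partitions of $\{1,\dots,q\}$ whose blocks all have size at least two. One obtains this by expanding the $q$-th moment in terms of the correlation functions $\rho^\eps_k$ of the grand canonical measure and regrouping by cumulants; the cluster expansion of \cite{BGSS3}, valid in the regime $\mu_\eps\eps^d\ll 1$, gives $\rho^\eps_k(Z_k)\leq (1+O(\mu_\eps\eps^d))^k\mu_\eps^k\cM^{\otimes k}(V_k)$, so that the centering eliminates the disconnected contributions and only ``connected'' blocks survive, each producing the Poisson-type cumulant $\mu_\eps^{1-|B|/2}\int|f|^{|B|}\cM$. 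Non-integer $q\geq 1$ is then handled by H\"older's inequality.

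Plugging the three test functions into this bound concludes the proof. For $f=h$ or $f=\bar h$, the polynomial growth of $h$ in $v$ together with the Gaussian weight $\cM$ yields, for every $k\geq 1$,
\begin{equation*}
\int|h|^k\cM\,dz\leq C_k\bigl(\|\varphi\|^k_{L^k(\T^d)}+\|\psi\|^k_{L^k(\T^d)}\bigr)\leq C_k'\|\phi\|^k_{L^q(\T^d)}
\end{equation*}
(using $|\T^d|=1$ and $k\leq q$), and the same estimate holds for $\bar h$ since $|\chi|\leq 1$. Substituting in the moment bound yields \eqref{eq: borne Lq cutoffxi}. For $f=h^>$, the support of $1-\chi(|v|^2/R)$ lies in $\{|v|^2\geq R\}$, and the elementary Gaussian tail estimate
\begin{equation*}
\int_{|v|^2\geq R}|v|^{2k}e^{-|v|^2/2}\,dv\leq e^{-R/4}\int|v|^{2k}e^{-|v|^2/4}\,dv\leq C_k\,e^{-R/4}
\end{equation*}
shows that each factor $\int|h^>|^{|B|}\cM$ carries at least one power of $e^{-R/4}$; keeping only one such factor per term in the partition sum proves \eqref{cutoffxi}. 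The main obstacle is really the moment estimate above: it relies on quantitative control of the cumulants of the grand canonical hard-sphere measure in the low density regime, which is the cluster-expansion machinery developed in \cite{BGSS3,BGSS4} and which we invoke here as a black box.
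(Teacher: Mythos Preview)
Your proof is correct and follows essentially the same route as the paper: reduce to $\tau=0$ by invariance of the equilibrium measure, invoke the equilibrium moment bound $\bbE_\eps\big[(\zeta^\eps_0(h))^q\big]\leq C_q\|h\|_{L^q_\cM}^q$ (which the paper simply cites as Proposition~A.1 of \cite{BGSS3}, while you sketch its cumulant/cluster-expansion origin), and then apply it to $h$, $\bar h$, and $h^>$ together with the Gaussian tail estimate \eqref{cutoff} on $\{|v|^2\geq R\}$.
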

As a consequence, the convergence in law of $\big( \bar \xi^{\eps}_{\tau_\ell} \big( \phi^{(\ell)} \big)  \big)_{\ell \leq L}$
(derived in Proposition \ref{Prop: marginales temps cut-off} below) will imply the convergence in law of $\big( \xi^{\eps}_{\tau_\ell} \big( \phi^{(\ell)} \big)  \big)_{\ell \leq L}$, i.e. Proposition \ref{Prop: marginales temps}.

\begin{proof}[Proof of Lemma {\rm\ref{Lem: cut-off approximation}}]

Recall (see Proposition A.1 in~\cite{BGSS3}) that for any $\eps$ small enough, 
the following holds under the equilibrium measure for any  function $h$ 
\begin{equation}
\label{eq: moment ordre pR}
 \bbE_\eps \Big( \big( \xi^{\eps}_\tau (h)\big) ^q \Big) \leq C_q\| h\|_{L^q_{\cM}}^q  \,,
\end{equation}
with $1 \leq q < \infty$.
Since for~$R \geq  1$
\begin{equation}
\label{cutoff}
\begin{aligned}
& \Big\|   \varphi \cdot v \Big( \chi \big( {|v|^2 \over  R}\big) - 1\Big) \Big\| _{L^q_\cM} ^q
\leq C\| \varphi \|_{L^q (\T^d )} ^q  e^{-R/4} \\
 &  \Big\|  \psi \big(  {|v|^2 \over d+2}-1\big)   \Big( \chi \big({|v|^2 \over  R}\big)  - 1\Big) \Big\| _{L^q_\cM} ^q
 \leq C\| \psi\|_{L^q (\T^d)}^q  e^{-R/4} \, , 
 \end{aligned}
\end{equation}
we find~(\ref{cutoffxi}). 
For the same reason \eqref{eq: borne Lq cutoffxi}  holds. 
This completes  Lemma \ref{Lem: cut-off approximation}.
\end{proof}

\subsection{Covariance of the hydrodynamic fields}
\label{cov}

\begin{Prop}
\label{Prop: marginales temps cut-off}
For arbitrary times $\tau_1, \dots, \tau_L$ and test functions $\phi^{(1)} = (\varphi^{(1)}, \psi^{(1)}),\dots$ and~$\phi^{(L)} = (\varphi^{(L)}, \psi^{(L)})$ chosen as in \eqref{def test function}, the time marginals $\big( \bar \xi^{\eps}_{\tau_\ell} \big( \phi^{(\ell)} \big)  \big)_{\ell \leq L}$ converge in law to  the limiting process
$\big(  {\mathcal U}_{\tau_\ell }  (\varphi^{(\ell)}),   { \Theta}_{\tau_\ell} (\psi^{(\ell)})  \big)_{\ell \leq L}$  as $\mu_\eps$ tends to infinity.
\end{Prop}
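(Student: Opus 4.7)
The plan is to reduce Proposition~\ref{Prop: marginales temps cut-off} to the combined application of Theorem~\ref{thmTCL} (asymptotic Gaussianity) and Theorem~\ref{thmCov} (covariance identification), followed by the classical hydrodynamic limit from the scaled linearized Boltzmann equation to Stokes-Fourier. The cut-off in velocities introduced in~(\ref{u-theta-defR}) is precisely what makes the test functions bounded, so that the quantitative results of~\cite{BGSS3,BGSS4} can be used; the surplus error introduced is controlled by~(\ref{cutoffxi}).

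First, I would verify the asymptotic Gaussianity of the joint vector $\bigl(\bar\xi^{\eps}_{\tau_\ell}(\phi^{(\ell)})\bigr)_{\ell \leq L}$. By linearity, each $\bar\xi^\eps_{\tau_\ell}(\phi^{(\ell)}) = \zeta^\eps_{\tau_\ell/\alpha}(\bar h^{(\ell)})$ with test functions $\bar h^{(\ell)}$ satisfying~(\ref{h-bound}), hence $\|\bar h^{(\ell)}\|_{L^\infty}\leq C R^2\, \|\phi^{(\ell)}\|_{W^{1,\infty}}$. With the choice $R=\alpha^{-1}=\log\log\log\mu_\e$, the error estimate~(\ref{convergence-Wick}) of Theorem~\ref{thmTCL} reads
\begin{equation*}
 \bigl(CR^2\bigr)^p \Bigl(\frac{CT}{\alpha^2}\Bigr)^{(2p-1)/2} (\log\log\mu_\e)^{-1/4} = O\bigl((\log\log\log\mu_\e)^{O(p)}\bigr)\cdot (\log\log\mu_\e)^{-1/4} \xrightarrow[\mu_\e\to\infty]{}0,
\end{equation*}
so Wick's rule holds for arbitrary polynomial combinations of the $\bar\xi^\eps_{\tau_\ell}(\phi^{(\ell)})$. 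Thus identifying the limit reduces to identifying all pairwise covariances.

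Next, I would compute $\bbE_\eps\bigl[\bar\xi^\eps_{\tau_i}(\phi^{(i)})\,\bar\xi^\eps_{\tau_j}(\phi^{(j)})\bigr]$ for $\tau_i\leq \tau_j$. By the Markov-type invariance used in~(\ref{cov-def}) and Theorem~\ref{thmCov} applied with $g^0=\bar h^{(i)}$ and $h=\bar h^{(j)}$, this covariance equals
\begin{equation*}
 \int g_\alpha^{(i)}\!\Bigl(\frac{\tau_j-\tau_i}{\alpha}\Bigr)\,\bar h^{(j)}\,\cM\,dx\,dv \;+\; O\!\Bigl(R^4 (T/\alpha^2)^{3/2}(\log\log\mu_\e)^{-1/4}\Bigr),
\end{equation*}
where $g_\alpha^{(i)}$ solves the scaled linearized Boltzmann equation~(\ref{eq:scaled- lbeq}) with initial datum $\bar h^{(i)}$. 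The error term vanishes by the same counting as above.

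It remains to pass to the hydrodynamic limit $\alpha\to 0$ and simultaneously remove the cut-off $R\to\infty$. Here I would invoke the fact (as recalled in Section~1.3 and available in~\cite{BGL2,GL,BGSR2}) that under the diffusive rescaling $t=\sigma/\alpha$ and starting from an initial datum which is already an eigenfunction of the projection on non-oscillating hydrodynamic modes, the solution to~(\ref{eq:scaled- lbeq}) converges weakly to the corresponding solution of the Stokes-Fourier system~(\ref{Stokes}): acoustic modes are filtered out since $\bar h^{(i)}\to \varphi^{(i)}\cdot v + \psi^{(i)}\bigl(|v|^2/(d+2)-1\bigr)$ (which satisfies $\nabla_x\cdot\varphi^{(i)}=0$ and the Boussinesq constraint), and the compressible density mode is absent. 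Projecting the resulting local equilibrium against $\bar h^{(j)}$ against $\cM$ yields
\begin{equation*}
 \int \bigl(\varphi^{(j)}\cdot e^{\nu(\tau_j-\tau_i)\Delta_x}\varphi^{(i)}\bigr)\,dx
 + \frac{2}{d+2}\int \psi^{(j)}\,e^{\kappa(\tau_j-\tau_i)\Delta_x}\psi^{(i)}\,dx,
\end{equation*}
which is precisely the covariance~(\ref{fluct-Stokes covariance}) of $\bigl(\mathcal U_{\tau_\ell}(\varphi^{(\ell)}) + \Theta_{\tau_\ell}(\psi^{(\ell)})\bigr)$. Combined with Gaussianity, this identifies the law of the finite-dimensional marginals.

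The main obstacle is the last step: one must ensure that the hydrodynamic limit $\alpha\to 0$ for the scaled linearized Boltzmann equation holds in a mode of convergence strong enough to pair with the unbounded-in-$v$ test function $\bar h^{(j)}$, and that the filtering of acoustic waves is effective under the joint limit $\mu_\e\to\infty$, $\alpha\to 0$, $R\to\infty$ with the prescribed rates. This requires checking that $g_\alpha^{(i)}(\tau/\alpha)$ relaxes to a local thermodynamic equilibrium supported on the incompressible/Boussinesq subspace with a quantitative rate compatible with the growth $R^2$ of the test functions, and that the cross term $\int g_\alpha^{(i)}(\tau/\alpha)\bigl(\bar h^{(j)}-(\bar h^{(j)})^{\mathrm{hydro}}\bigr)\cM\,dx\,dv$ is negligible. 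This is essentially the content of the spectral analysis of $\cL$ and of the uniform-in-$\alpha$ estimates on the remainder; as $R\to\infty$ the truncated hydrodynamic projections converge to the untruncated ones, closing the argument.
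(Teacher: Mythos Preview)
Your proposal is correct and follows essentially the same two-step strategy as the paper: apply Theorem~\ref{thmTCL} to obtain Wick's rule for the truncated field (the paper's Step~2), then apply Theorem~\ref{thmCov} together with the diffusive hydrodynamic limit of the linearized Boltzmann equation to identify the pairwise covariances with~(\ref{fluct-Stokes covariance}) (the paper's Step~1). The only substantive difference is that where you invoke~\cite{BGL2,GL,BGSR2} and sketch the obstacle, the paper carries out the hydrodynamic limit in a self-contained way via the energy inequality~(\ref{scaled energy inequality}), the coercivity~(\ref{coercivity}), and the moment equations leading to~(\ref{eq: non oscillating}) and the Stokes-Fourier system; this explicit argument is what handles the joint limit $\alpha\to 0$, $R\to\infty$ that you flag as the main difficulty.
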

Combined with the approximation Lemma \ref{Lem: cut-off approximation}, this completes the proof of Proposition \ref{Prop: marginales temps}.
The proof of Proposition \ref{Prop: marginales temps cut-off} is split into two parts, first a control of the limiting covariance and then the derivation of Wick's rule to prove that the limiting process is Gaussian.

\medskip

\noindent
{\bf Step 1. Control of the covariance}.
Let us define the hydrodynamic, non oscillating projections
\begin{equation}
\label{eq: well prepared}
\begin{aligned}
& g^0 (x,v) := \left( u_0(x) \cdot v+ \theta_0 (x) {|v|^2-(d+2) \over 2}  \right)\, ,\\
 &  h(x,v) :=\left(  \varphi (x) \cdot v + \psi(x) \left(  { |v|^2 \over d+2 }-1 \right)  \right)\, ,
 \end{aligned}
 \end{equation}
 for some smooth divergence free vector fields $u_0, \varphi$, and some smooth functions $\theta_0, \psi$. 
The scaling in $g^0,h$ has been tuned asymmetrically so that the initial covariance is given by
 \begin{equation*}
 \bbE_\eps \left[   \bar  \xi^{\eps}_0 ( \phi_0 )    \bar \xi^{\eps}_0 ( \phi ) \right]  \longrightarrow \int  ( u(\tau) \cdot \varphi + \theta(\tau) \psi ) dx\,,  \quad \mu_\eps \to \infty\, .
\end{equation*}
We are   going to study the covariance of the joint process $\bar \xi^\eps_\tau $ by applying Theorem \ref{thmCov} with 
\begin{equation}
\label{eq: truncated initial data}
\bar g^0 (x,v) := g^0(x,v) \chi \left({|v|^2 \over  R}\right) \, , \quad 
\bar  h(x,v) =  h(x,v) \chi \left({|v|^2 \over  R}\right)\, .
 \end{equation}
   Setting
   $$\displaystyle \phi_0:=(u_0,\frac{d+2}2\theta_0)\, , \quad 
\phi :=(\varphi,\psi)\, ,
$$
we plug the bounds~(\ref{h-bound}) on the test functions into the estimate~(\ref{convergence-rate}) of Theorem \ref{thmCov}, and recalling the definition~(\ref{u-theta-defR}) of the truncated rescaled fluctuation field, we
 obtain that  for any $  T>0$ such that $(T/\alpha^2)   \ll (\log \log \mu_\eps)^{-1/2}$,
\begin{equation}
\label{approximate-covarianceR}
\begin{aligned}
\sup_{t \in [0,T ]} & \left| \bbE_\eps \left[    \bar\xi^{\eps}_0 ( \phi_0 )     \bar\xi^{\eps}_\tau ( \phi ) \right]  - \int \cM \tilde g_\alpha (t) \bar h dxdv\right|\\
 & \leq CR\| \phi_0 \|_{W^{1,\infty}} \| \phi\|_{W^{1, \infty}}   \left(\frac{C T  } {\alpha^2} \right)^{3/2}  (\log \log \mu_\eps)^{-1/4} \,,
 \end{aligned}
\end{equation}
where $\tilde g_\alpha $ is the solution to  the time-rescaled equation
\begin{equation}
\label{diffusive-beq}
\alpha  \partial_\tau \tilde g_\alpha +v\cdot \nabla_x \tilde g_\alpha  +\frac1\alpha \cL \tilde g_\alpha = 0\, , \qquad \tilde g_{\alpha |t = 0} = \bar g^0  \;.
\end{equation} 
To conclude to the convergence of the covariance as~$\alpha \to 0$, we just need to identify the limit of~$\displaystyle \int \cM \tilde g_\alpha (\tau) \bar h dxdv$.

\medskip
The starting point for the study of hydrodynamic limits of the linearized Boltzmann equation (\ref{diffusive-beq})  is the scaled energy inequality
\begin{equation}\label{scaled energy inequality}
\frac12  \left\| \tilde g_\alpha(\tau) \right\|^2_{L^2(\cM dvdx)} +{1\over \alpha^2 }  \int_0^\tau  \int \tilde g_\alpha \cL \tilde g_\alpha (\tau' ) \cM dvdx d\tau'\leq  \frac12 \| \bar g^0\|_{L^2(\cM dv dx)}^2\,.
 \end{equation}
Recall (see~\cite{grad, Hilbert}) that the linearized  collision
operator $\cL$ with hard sphere cross section defined by (\ref{eq:LBCO})  is a nonnegative unbounded 
self-adjoint operator on $L^2(\cM dv)$  with domain
$$
\cD(\cL)=L^2\big(\R^d;(1+|v|)  \cM dv\big)
$$
and nullspace
$$
\Ker(\cL)=\Span\Big\{1,v_1,\dots,v_d,|v|^2\Big\}\,.
$${ In particular we recover  from~(\ref{scaled energy inequality})  the uniform  $L^2$  bound
$$
 \left\| \tilde g_\alpha(\tau) \right\|_{L^2(\cM dvdx)}  \leq \| \bar g^0\|_{L^2(\cM dvdx)}  \leq \|  g^0\|_{L^2(\cM dvdx)} \,.
$$
This bound   implies that  there is $g \in L^\infty_\tau( L^2(\cM dvdx))$ such that, up to extraction of a subsequence,
\begin{equation}
\label{L2-cv}
\tilde g_\alpha  \rightharpoonup g \hbox{ weakly in } L^2_{\rm loc} (d\tau,L^2(\cM dvdx))\,.
\end{equation}
Moreover the following coercivity estimate holds : there exists $C>0$ such that, for 
each~$g$ in~$\cD(\cL)\cap(\Ker(\cL))^\perp$
\begin{equation}
\label{coercivity}
\int g\cL g(v)\cM (v)dv\geq C\|g\|_{L^2((1+|v|)  \cM  dv)}^2\,.
\end{equation}
The dissipation  thus  further provides  
$$ 
\left\| \tilde g_\alpha -\Pi \tilde g_\alpha\right\|_{L^2(  (1+|v|) \cM dvdxdt)} = O(\alpha) \, ,
$$
where~$\Pi$ denotes the orthogonal projection onto~$\Ker(\cL)$ in~$L^2(\cM dvdx)$.
We deduce from the previous estimate that
\begin{equation}
\label{ansatz}
 g (\tau,x,v) =\Pi g(\tau,x,v) \equiv \rho(\tau,x) + u(\tau,x)\cdot v +\theta(\tau,x) {|v|^2- d \over 2}\, \cdotp
 \end{equation}
 It remains to compute the equations on~$\rho$,~$u$ and~$\theta$.  Denoting   $\displaystyle \la g \ra := \int g \cM dv $
 and recalling~(\ref{diffusive-beq}), 
  the moment equations state
$$
\begin{aligned}
& \alpha \d_\tau \la \tilde g_\alpha \ra +  \nabla_x \cdot \la \tilde g_\alpha  v\ra=0 \, ,\\
& \alpha \d_\tau \la \tilde g_\alpha v\ra + \nabla_x \cdot \la \tilde g_\alpha  v\otimes v\ra=0 \, ,\\
& \alpha \d_\tau \la \tilde g_\alpha |v|^2 \ra + \nabla_x \cdot \la \tilde g_\alpha  v|v|^2\ra=0\,.
\end{aligned}
$$
Using (\ref{L2-cv}) and (\ref{ansatz}) we deduce from the first two equations that
\begin{equation}
\label{eq: non oscillating}
\nabla_x \cdot u = 0 \, , \quad\nabla_x (\rho+\theta) =0\,,
\end{equation}
referred to as the incompressibility and Boussinesq constraints.   We thus have
\begin{equation}
\label{new ansatz}
 g (\tau,x,v)   =   u(\tau,x)\cdot v +\theta(\tau,x) {|v|^2- (d+2) \over 2}\,, \quad \nabla_x \cdot u = 0\, . \end{equation}
Note that, up to the cut-off in $v$ which can be removed with a small error thanks to~(\ref{cutoff}),  the test function $\bar h$ is in the kernel of acoustic operator. It follows that  we only need to characterize the mean motion, namely derive the equations for~$ P \la \tilde g_\alpha v\ra$ and~$\la \tilde g_\alpha (|v|^2 -d-2)\ra$:
$$
\begin{aligned}
& \d_\tau P \la \tilde g_\alpha v\ra +{1\over \alpha} P\nabla_x \cdot \la \tilde g_\alpha ( v\otimes v-\frac1d |v|^2\mbox{Id})\ra=0 \, ,\\
& \d_\tau  \frac1{d+2} \la \tilde g_\alpha (|v|^2 -d-2)\ra +{1\over \alpha}\nabla_x \cdot \la \tilde g_\alpha  \frac1{d+2}v(|v|^2-d-2)\ra=0\,,
\end{aligned}
$$
where we recall that $P$ is the Leray projection on divergence free vector fields.
Define the kinetic momentum flux $\displaystyle A(v) := v\otimes v-\frac{1}{d} |v|^2 \mbox{Id}$ and the kinetic energy flux~$\displaystyle B(v) :=  \frac1{2}  v(|v|^2-d-2)$. As~$A, B $ belong to~$ (\Ker \cL)^\perp$, and $\cL$ is a Fredholm operator, there exist
pseudo-inverses $\tilde A, \tilde B$ in~$ (\Ker \cL)^\perp$ such that $A = \cL \tilde A$ and $B = \cL \tilde B$. Then,
$$
\begin{aligned}
& \d_\tau  P \la \tilde g_\alpha v\ra + {1\over \alpha}  P\nabla_x \cdot \la (\cL \tilde g_\alpha) \tilde A \ra=0 \, ,\\
& \frac1{d+2} \d_\tau  \la \tilde g_\alpha (|v|^2 -d-2)\ra +{1\over \alpha} \frac2{d+2}\nabla_x \cdot \la (\cL \tilde g_\alpha ) \tilde B\ra=0\,.
\end{aligned}
$$
Using the equation
\begin{equation}
\label{eqL}
{1\over \alpha} \cL \tilde g_\alpha = -v\cdot \nabla_x \tilde g_\alpha -\alpha \d_\tau \tilde g_\alpha 
\end{equation}
we get
\begin{equation}
\label{time-control}
\begin{aligned}
& \d_\tau  P \la \tilde g_\alpha v\ra -   P\nabla_x \cdot \la(v\cdot \nabla_x+\alpha \d_\tau )  \tilde g_\alpha \tilde A \ra=0 \, ,\\
& \frac1{d+2}  \d_\tau \la \tilde g_\alpha (|v|^2 -d-2)\ra -\frac2{d+2}\ \nabla_x \cdot \la(v\cdot \nabla_x+\alpha \d_\tau ) \tilde g_\alpha  \tilde B\ra=0\,.
\end{aligned}
\end{equation}
Then, plugging the Ansatz (\ref{ansatz}), and taking limits in the sense of distributions, we get the Stokes-Fourier equations
\begin{align*}
& \d_\tau  u - \nu  \Delta_x u=0 \, , \quad \nabla_x \cdot  u = 0\, ,\\
& \d_\tau \theta  - \kappa  \Delta_x \theta =0\,,
\end{align*}
with initial data as in \eqref{eq: initial data}
\begin{equation*}
u_{|\tau=0}(x) := P \int g_0(x,v) v \cM(v)  \, dv \, , \quad \theta_{|\tau=0}(x) :=  \int g_0 (x,v) \Big(  \frac{|v|^2}{ (d+2)}-1\Big)   \cM(v) \, dv ,
\end{equation*}
and 
where the diffusion coefficients are given by
\begin{equation}
\label{mu-kappa}
\nu := \frac{1}{(d-1)(d+2)} \la A : \tilde A \ra 
\quad \text{and} \quad 
\kappa := \frac{2}{d(d+2)} \la B \cdot \tilde B \ra\, .
\end{equation}
We therefore end up  with the following convergence as $\alpha \to 0$
\begin{equation}
\label{SF-weaklimit}
\int \cM  \tilde g_\alpha (\tau) \bar h dxdv \longrightarrow \int  ( u(\tau) \cdot \varphi + \theta( \tau ) \psi ) dx\,.
\end{equation}
Returning to \eqref{approximate-covarianceR},  we have proved that
\begin{equation}
\label{eq: cov tronquee limite}
\sup_{\tau \in [0,T ]} \bbE_\eps \left[   \bar  \xi^{\eps}_0 ( \phi_0 )    \bar \xi^{\eps}_\tau ( \phi ) \right]  \longrightarrow \int  ( u(\tau) \cdot \varphi + \theta(\tau) \psi ) dx\,,  \quad \mu_\eps \to \infty\, .
\end{equation}

\begin{Rmk}
Since  the initial data $g^0$ is well-prepared, both the  purely kinetic component and the fast oscillating  acoustic waves are negligible, so   the convergence  of $\tilde g_\alpha$ can be shown actually to hold in  strong sense. Using energy methods, it is even possible to obtain a rate of convergence for~{\rm(\ref{SF-weaklimit})}. 
\end{Rmk}

\bigskip

\noindent
{\bf Step 2. Wick's rule}

Consider~$p$ times~$\tau_1,\dots,\tau_p$, possibly repeated. Thanks to the cut-off~(\ref{h-bound}), we can apply Theorem \ref{thmTCL} to obtain 
\begin{equation}
\label{WickR}
\begin{aligned}
 &\Big| \bbE_\eps \Big[ \bar\xi^{\eps}_{\tau_1} \big( \phi^{(1)} \big)\;  \dots 
\;  \bar\xi^{\eps}_{\tau_p} \big( \phi^{(p)} \big)\Big]   - \sum_{\eta \in {\mathfrak S}_p^{\text{pairs}}}
\ \prod_{\{i,j\}  \in \eta} \bbE_\eps \left[  \bar\xi^{\eps}_{\tau_i} \big( \phi^{(i)} \big) \bar\xi^{\eps}_{\tau_j} \big( \phi^{(j)} \big)  \right] \Big| \\
&\qquad\qquad  \leq C_p R^p  \prod_{i = 1} ^p \| \phi^{(i)} \|_{L^\infty}     \Big(\frac {CT} {\alpha^2}\Big) ^{(2p-1)/2} (\log   \log \mu_\eps )^{-1/4}  \, . 
   \end{aligned}
\end{equation}
With the scaling condition (\ref{scaling}), 
we get that the right-hand side converges to 0 as $\mu_\eps \to \infty$ which implies
the asymptotic pairing of the moments of $ \bar\xi^{\eps}_{\tau}$.  
Since the limiting covariance is characterized by \eqref{eq: cov tronquee limite}, this completes Proposition \ref{Prop: marginales temps cut-off}.  \qed

\section{Tightness of hydrodynamic fields on diffusive time scales}
\label{tightness}

Let us first introduce for any~$k \in {\mathbb Z}$  the Sobolev space $\bbH^k$ in $\T^d$ with the norm
\begin{equation}
\label{eq: Sobolev -k}
\| F \|_{k}^2 := \sum_{j\in \Z^d}  {\left(1+|j|^2\right)^k} \; | \hat F_j |^2 ,
\end{equation}
where $(\hat F_j)$ stand for the Fourier coefficients of $F$.

\begin{Prop} 
\label{thmtight diffusive}
There exists $k>0$ such that, in the diffusive limit
$$
\mu_\eps \to \infty, \alpha \to 0, \ 
\quad \text{with} \quad 
\mu_\eps \eps^{d-1} = \alpha^{-1} \leq   \log \log \log \mu_\eps, 
$$ 
the  fluctuation field~$\left(\xi^\eps_\tau \right)_{\tau \geq 0}$ defined 
by~\eqref{fluct-u-theta}
 is tight in the Skorokhod space $D\left([0,T] , \bbH^{-k} \right)$. More precisely, 
\begin{equation}
\label{eq: tightness}
\begin{aligned}
&  \lim_{\delta \to 0^+} \lim_{\mu_\eps \to \infty} 
\bbP_\varepsilon \Big[ \sup_{ | \sigma - \tau| \leq \delta \atop s,\tau  \in [0,T] }
\big\| \xi^\varepsilon_\tau   - \xi^\varepsilon_ \sigma \big \|_{-k} \geq \delta'
\Big] = 0 \,, \qquad \forall \delta'>0 \,,  \\
& 
\lim_{A \to \infty} \lim_{\mu_\eps \to \infty} \bbP_\varepsilon \Big[ \sup_{ \tau  \in [0,T]}
\big \| \xi^\varepsilon_\tau \big \|_{-k} \geq A \Big] = 0\, .
\end{aligned}
\end{equation}
\end{Prop}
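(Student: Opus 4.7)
The plan is to establish the two conditions in~\eqref{eq: tightness} by decomposing the $\bbH^{-k}$ norm in a Fourier basis and controlling each mode using the analytical results of Section~\ref{Hydro limits}. Fix $\phi_j(x) = e^{2\pi i j\cdot x}$ for $j \in \Z^d$ (the vector/scalar splitting into the divergence-free and thermal components of $\phi = (\varphi,\psi)$ being handled componentwise); by~\eqref{eq: Sobolev -k}
$$\|\xi^\eps_\tau - \xi^\eps_\sigma\|_{-k}^2 = \sum_{j \in \Z^d} (1+|j|^2)^{-k} \, \big|(\xi^\eps_\tau - \xi^\eps_\sigma)(\phi_j)\big|^2,$$
and the argument amounts to proving a mode-by-mode Kolmogorov-type moment bound and summing in $j$, taking $k$ large enough that the tail converges.

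For the second moment of the time increments, stationarity of the equilibrium measure under the dynamics gives
$$\bbE_\eps\big[\big(\xi^\eps_\tau(\phi_j) - \xi^\eps_\sigma(\phi_j)\big)^2\big] = 2\bbE_\eps\big[\xi^\eps_0(\phi_j)^2\big] - 2\bbE_\eps\big[\xi^\eps_0(\phi_j)\, \xi^\eps_{\tau-\sigma}(\phi_j)\big].$$
Theorem~\ref{thmCov}, applied after the velocity cutoff of Section~\ref{truncation}, together with the hydrodynamic limit carried out in Section~\ref{cov} (cf.~\eqref{SF-weaklimit} and~\eqref{fluct-Stokes covariance}), identifies the right-hand side with the Stokes--Fourier covariance; evaluated on a Fourier mode this yields $\bbE_\eps\big[\big(\xi^\eps_\tau(\phi_j) - \xi^\eps_\sigma(\phi_j)\big)^2\big] \leq C|\tau-\sigma|(1+|j|^2)$ plus a vanishing error, the factor $(1+|j|^2)$ arising by expanding $I-e^{\nu(\tau-\sigma)\Delta_x}$ (and its thermal analogue) on the mode $\phi_j$.

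Higher moments are then obtained via the Wick approximation of Theorem~\ref{thmTCL}, which (after the velocity cutoff) gives
$$\bbE_\eps\Big[\big(\xi^\eps_\tau(\phi_j) - \xi^\eps_\sigma(\phi_j)\big)^4\Big] \leq 3\Big(\bbE_\eps\big[\big(\xi^\eps_\tau(\phi_j) - \xi^\eps_\sigma(\phi_j)\big)^2\big]\Big)^2 + \text{error} \leq C|\tau-\sigma|^2(1+|j|^2)^2 + \text{error}.$$
Summing the weighted mode contributions via Jensen's inequality then yields $\bbE_\eps\big[\|\xi^\eps_\tau - \xi^\eps_\sigma\|_{-k}^4\big] \leq C|\tau-\sigma|^2$ as long as $k > d/2 + 2$, and the Billingsley criterion for Skorokhod tightness (Theorem~13.2, as cited in the paper) gives the first line of~\eqref{eq: tightness}; the second line follows from this modulus estimate combined with the pointwise bound $\sup_\tau \bbE_\eps[\|\xi^\eps_\tau\|_{-k}^2] \leq C$, itself an immediate consequence of stationarity and~\eqref{eq: borne Lq cutoffxi}.

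The main technical obstacle is the control of the error terms propagating at each step. After the velocity cutoff, the test function associated with the mode $\phi_j$ has $W^{1,\infty}$-norm of order $R^2|j|$ by~\eqref{h-bound}, with $R = \log\log\log\mu_\eps$, and the factor $(T/\alpha^2)^{(2p-1)/2}$ in Theorems~\ref{thmCov}--\ref{thmTCL} grows polynomially in $1/\alpha$. I would therefore truncate the Fourier sum at a slowly diverging threshold $|j| \leq J_\eps$: on low modes the quantitative estimates of Theorems~\ref{thmCov}--\ref{thmTCL} dominate, whereas on high modes the crude moment bound~\eqref{eq: borne Lq cutoffxi} combined with the fast decay $(1+|j|^2)^{-k}$ (for $k$ large) provides a summable contribution. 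Calibrating $J_\eps$ against the iterated-logarithm scaling~\eqref{scaling} so that both contributions vanish in the limit $\mu_\eps \to \infty$ is the delicate technical point of the argument.
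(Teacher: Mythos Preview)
Your overall strategy---Fourier decomposition, moment bounds via the Wick rule, Kolmogorov-type criterion---is the paper's as well, but two genuine gaps prevent the argument from closing.

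\textbf{Short-time increments.} The error terms in Theorems~\ref{thmCov} and~\ref{thmTCL} are of a \emph{fixed} size (roughly $(\log\log\mu_\eps)^{-1/4}$ times polynomial factors in $T/\alpha^2$ and $R$) and do not scale with $|\tau-\sigma|$. Hence for $|\tau-\sigma|$ smaller than this error your bound ``$C|\tau-\sigma|^2(1+|j|^2)^2 + \text{error}$'' is dominated by the error, and any Kolmogorov/GRR criterion, which must hold for \emph{all} pairs $\sigma,\tau$, fails. Your Fourier truncation $J_\eps$ addresses the $j$-growth of the error but not this time-scale issue. The paper's resolution is a two-scale argument: a modified Garsia--Rodemich--Rumsey inequality (Proposition~\ref{prop: Modified Garsia, Rodemich, Rumsey inequality}) with a time cutoff $a=(\log\log\mu_\eps)^{-1/10}$ separates increments $|\tau-\sigma|\geq a$ (controlled by the moment bound) from increments $|\tau-\sigma|\leq 2a$; the latter are then reduced, via stationarity and a union bound over $O(T/\alpha^2)$ intervals of kinetic length, to the kinetic-time-scale tightness estimates of~\cite{BGSS2}. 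A related point is that $\xi^\eps$ has jumps (collisions), so the ordinary GRR/Kolmogorov inequality is not directly applicable; this is another reason the cutoff version is needed.

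\textbf{Time regularity of the covariance.} You read off the factor $|\tau-\sigma|(1+|j|^2)$ from the Stokes--Fourier semigroup, but Theorem~\ref{thmCov} only approximates the covariance by $\int\cM\,\tilde g_\alpha\bar h$ with $\tilde g_\alpha$ solving~\eqref{diffusive-beq}; the further passage~\eqref{SF-weaklimit} to Stokes--Fourier is a weak limit with no uniform-in-$\tau$ rate. Analysing directly the time regularity of the hydrodynamic moments of $\tilde g_\alpha$ via~\eqref{time-control} gives only $C^{1/2}_\tau(\bbH^{-2}_x)$: the $O(\alpha)$ corrections satisfy $\|\alpha\nabla_x\cdot\la(\tilde g_\alpha(\tau)-\tilde g_\alpha(\sigma))\tilde A\ra\|_{\bbH^{-2}}\leq C\min(\alpha,|\tau-\sigma|/\alpha)\leq C|\tau-\sigma|^{1/2}$. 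With a second-moment bound of order $|\tau-\sigma|^{1/2}$, the fourth moment yields only $|\tau-\sigma|$, the borderline exponent, which is insufficient. The paper therefore works with sixth moments ($b=6$, $\gamma=7/3$), producing $|\tau-\sigma|^{3/2}$.
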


The tightness property for kinetic times  
relies on the Garsia-Rodemich-Rumsey inequality on the modulus of continuity
of a function $\varphi_\tau : [0,T] \to \R$, which we recall (\cite{Varadhan}): for $b \geq 4$
\begin{equation}
\label{GRR} 
\sup_{0 \leq \sigma,\tau \leq T \atop |\tau - \sigma| \leq \delta} 
\big| \varphi_\tau - \varphi_\sigma   \big|
\leq C\left(  \int_0^T \int_0^T d \sigma d \tau \;
 \frac{| \varphi_\tau - \varphi_\sigma|^b}{|\tau - \sigma|^\gamma} \right)^{1/b} 
 \delta^{\frac{\gamma -2}{b} }\,,\qquad
 \gamma\in ]2,3[\;.
\end{equation}
Because of collisions in the Newtonian dynamics, the fluctuation field $\xi^\eps$ has jumps and 
this inequality does not apply directly. 
We therefore start by stating a modified inequality, whose proof is a slight adaptation of \cite{Varadhan}
which can be found in \cite{BGSS2} (see Proposition 6.2.4).
\begin{Prop}
\label{prop: Modified Garsia, Rodemich, Rumsey inequality}
Let $F : [0,T] \to \bbR$ be a given function and define for $a>0$, $b \geq 4$
\begin{equation}
\label{eq: bound GRR alpha 0} 
 B_a (F) : = 
\int_0^T \int_0^T d \sigma d \tau \;
 \frac{| F_\tau - F_\sigma |^b}{| \tau - \sigma |^\gamma} {\bf 1}_{| \tau - \sigma | > a}\,,\qquad
 \gamma\in ]2,3[\;.
\end{equation}
Then the  modulus of continuity of $F$ is controlled by
\begin{equation}
\label{eq: modulus GRR cutoff} 
\sup_{0 \leq \sigma , \tau \leq T \atop | \tau - \sigma | \leq \delta} \big| F_\tau - F_\sigma  \big|
\leq 2 \sup_{0 \leq \sigma, \tau \leq T \atop |\tau - \sigma| \leq  2 a} \big| F_\tau - F_\sigma \big| 
\; + \; C  B_a (F)^{ \frac{1}{b} } \;  \delta^{\frac{\gamma-2}{b} }\,  .
\end{equation}
\end{Prop}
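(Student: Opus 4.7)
The strategy is to adapt the classical dyadic chaining proof of the Garsia--Rodemich--Rumsey inequality, truncating the recursion at the scale $\sim a$ so that every intermediate increment of $F$ is between points at distance exceeding $a$ (hence captured by $B_a$), while the residual small-scale contribution is absorbed into the first supremum on the right-hand side.

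First I would dispose of the trivial range: if $\delta \le 2a$, the inequality holds since the left-hand side is already bounded by $\sup_{|\tau-\sigma|\le 2a}|F_\tau - F_\sigma|$. So assume $\delta > 2a$ and fix $\sigma<\tau$ with $|\tau-\sigma|\le \delta$. Introduce
\[
g_a(u) := \int_0^T \frac{|F_u - F_v|^b}{|u-v|^\gamma}\,\indc_{|u-v|>a}\,dv,
\]
so that $\int_0^T g_a(u)\,du = B_a(F)$. I would run the Garsia construction: set $d_n := 2^{-n}|\tau-\sigma|$ and let $N$ be the smallest index with $d_N \le 4a$, so that $d_n > 4a$ for $n<N$. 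Starting from $\tau_0 := \tau$, at level $n<N$ I would select an intermediate point $\tau_{n+1}$ in a subinterval $I_n$ of length $\sim d_{n+1}$ lying at distance $\sim d_{n+1}$ from $\tau_n$ (so $|\tau_{n+1}-\tau_n|\sim d_{n+1} > 2a$). Markov applied to the two functions $v\mapsto |F_{\tau_n}-F_v|^b/|\tau_n-v|^\gamma\,\indc_{|\tau_n-v|>a}$ and $v\mapsto g_a(v)$ shows that on a majority of $I_n$ one simultaneously has
\[
|F_{\tau_{n+1}} - F_{\tau_n}|^b \le \frac{C\, g_a(\tau_n)\, d_{n+1}^\gamma}{|I_n|}, \qquad g_a(\tau_{n+1}) \le \frac{C\, B_a(F)}{|I_n|}.
\]
Since $|I_n|\sim d_{n+1}$, iterating gives $g_a(\tau_n)\le C B_a(F)/d_n$, whence $|F_{\tau_{n+1}} - F_{\tau_n}| \le C\, B_a(F)^{1/b}\, d_{n+1}^{(\gamma-2)/b}$.

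Telescoping yields $F_\tau - F_{\tau_N} = \sum_{n=0}^{N-1}(F_{\tau_n} - F_{\tau_{n+1}})$; the geometric series in $d_{n+1}=2^{-(n+1)}|\tau-\sigma|$ converges because $\gamma>2$, giving
\[
|F_\tau - F_{\tau_N}| \le C\, B_a(F)^{1/b}\, \delta^{(\gamma-2)/b}.
\]
Running the same construction starting from $\sigma$ to reach a point $\sigma_M$ with $|\sigma_M-\sigma|\le 2a$, one has $|\tau_N - \sigma_M|\le 2a$ by the stopping rule, so both residual differences $|F_{\tau_N} - F_{\sigma_M}|$ and $|F_{\sigma_M}-F_\sigma|$ are controlled by $\sup_{|\tau'-\sigma'|\le 2a}|F_{\tau'}-F_{\sigma'}|$, producing the factor $2$. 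Taking the supremum over all pairs $(\sigma,\tau)$ with $|\tau-\sigma|\le\delta$ closes the argument.

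The main obstacle is the joint control of the positions of the intermediate points $\tau_n$: one needs $|\tau_{n+1}-\tau_n|$ to remain strictly above $a$ at every step (so that the indicator inside $B_a$ is $1$, making the Chebyshev step meaningful), while also keeping $|\tau_{n+1}-\tau_n|\lesssim d_{n+1}$ to get the correct scaling. The auxiliary intervals $I_n$ must be chosen with these two constraints in mind, which is precisely why the recursion must be stopped at scale $\sim a$ instead of being iterated down to zero as in the classical GRR inequality; the price paid is the extra supremum term on the right-hand side, which in applications will account for the jumps of the empirical process.
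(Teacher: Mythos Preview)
The paper does not actually prove this proposition: it records that the argument is ``a slight adaptation of \cite{Varadhan} which can be found in \cite{BGSS2} (see Proposition 6.2.4).'' Your truncated dyadic chaining is precisely the method used there, so the overall strategy matches the intended proof.

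That said, your sketch has a genuine gap at the seed of the recursion. You launch the chain at $\tau_0=\tau$ and then invoke $g_a(\tau_n)\le C B_a(F)/d_n$ inductively; but for $n=0$ this would require $g_a(\tau)\le C B_a(F)/|\tau-\sigma|$, and you have no control on $g_a$ at the prescribed endpoint $\tau$. In the classical GRR this is repaired a posteriori by continuity of $F$; here $F$ may jump, so that route is unavailable. The standard fix (and the one in \cite{BGSS2}) is to first pick, by Markov on $g_a$, a \emph{good} interior point $t_0\in[\sigma,\tau]$ with $g_a(t_0)\le B_a(F)/(\tau-\sigma)$, and then run two chains from $t_0$: one toward $\sigma$ and one toward $\tau$, each stopped at scale $\sim a$. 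The two telescoping sums are bounded by $C\,B_a(F)^{1/b}\,\delta^{(\gamma-2)/b}$ as you computed, and the two leftover increments---between the terminal chain points and $\sigma$, $\tau$ respectively---are each dominated by $\sup_{|\tau'-\sigma'|\le 2a}|F_{\tau'}-F_{\sigma'}|$, which is exactly where the factor $2$ in the statement comes from. As written, your two chains start from $\tau$ and from $\sigma$ and head toward each other, so the assertion ``$|\tau_N-\sigma_M|\le 2a$'' does not follow (your $\tau_N$ lands near $\sigma$ and your $\sigma_M$ near $\tau$), and neither chain ever gets started because the initial Chebyshev step is unjustified. Reorganising around a single good interior starting point closes both issues simultaneously.
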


\medskip

\begin{proof}[Proof of Proposition \ref{thmtight diffusive}]
To prove  the tightness of the joint process $( \xi^{\eps}_\tau )_{\tau \geq 0}$
in $D( [0,T], \bbH^{-k})$ for some  $k$  large enough, we shall tune the parameter $a$, introduced in the statement of Proposition \ref{prop: Modified Garsia, Rodemich, Rumsey inequality}, as a small fraction of the kinetic time, i.e. $a \ll \alpha^2 $ in the diffusive scaling. 
More precisely, we shall use  \eqref{eq: bound GRR alpha 0} with the parameters
\begin{equation}
\label{eq: choice parameters}
b = 6\, , \quad \gamma = 7/3\, , \quad 
a = (\log\log \mu_\eps)^{-1/10}, \quad
\alpha =  (\log \log\log \mu_\eps)^{-1}.
\end{equation}
We deduce  from (\ref{eq: modulus GRR cutoff}) that, for arbitrary $\delta'>0$,
\begin{align}
\label{eq: borne H -k}
\bbP_\eps \left( \sup_{0 \leq \tau, \sigma \leq {T} \atop | \tau - \sigma | \leq \delta} 
\big\|  \xi^{\eps}_\tau  -  \xi^{\eps}_\sigma \big\|_{-k}^2 \geq \delta' \right) 
\leq &
 \bbP_\eps \left( \sum_j \frac{C^2  B_{a } \big(  \xi^{\eps} (\phi_j) \big)^{1/3}}{\left(1+ |j|^2\right)^k} \delta^{\frac{\gamma-2}{3} }\geq   \frac{\delta'}{4}  \right)\\
& + \bbP_\varepsilon \left( 
\sum_j \frac{4}{\left(1+ |j|^2\right)^k} \sup_{ | \sigma - \tau | \leq 2 a  \atop \sigma,\tau \in [0,T] }
|  \xi^{\eps}_\tau  (\phi_j)-  \xi^{\eps}_\sigma (\phi_j)\big |^2 \geq \frac{\delta'}{4}
\right), \nonumber
\end{align}
where $\phi_j(x) = \exp(2i\pi j\cdot x ) $ are the Fourier modes used to define the norm \eqref{eq: Sobolev -k}.
Since $a \ll \alpha^2$, the two events in the right-hand side of inequality  \eqref{eq: borne H -k} control
different time scales and their probabilities have to be estimated by different methods :
\begin{itemize}
\item for time increments $|\sigma - \tau| \geq a$, by a control on moments using the comparison with the limit process;
\item for small time increments $|\sigma - \tau| \leq 2a$, by reducing to the estimates on the kinetic times obtained in  \cite{BGSS2} (see Proposition 6.2.3). To do this, additional cut-off estimates to control divergences at large velocities are necessary.
\end{itemize}

\bigskip

\medskip

\noindent
{\bf Step 1. Control of the short hydrodynamic increments.}

We are first going to prove that 
\begin{align}
\label{eq: short hydrodynamic times.}
\lim_{\delta \to 0} \lim_{\mu_\eps \to \infty} \bbP_\eps \left( \sum_j \frac{C  B_{a } \big(  \xi^{\eps} (\phi_j) \big)^{1/3}}{\left(1+ |j|^2\right)^k} \delta^{\frac{\gamma-2}{3} }\geq \frac{\delta'}{4} 
\right) = 0 \, .
\end{align}
Assume that the following bound holds 
\begin{equation}
\label{eq: borne B a}
\bbE_\eps \Big( B_{a } \big(  \xi^{\eps}  (\phi)   \big) \Big) \leq C \;  \|\phi\|_{W^{2,\infty}}^6.
\end{equation}
Since for the Fourier basis $ \|\phi_j\|_{W^{2,\infty}} \leq  C |j|^2$, we deduce from 
\eqref{eq: borne B a} that 
for $k > d/2 + 2$, \eqref{eq: short hydrodynamic times.} follows from a Markov inequality 
as $\gamma >2$
\begin{align*}
\bbP_\eps \left( \sum_j \frac{C^2  B_{a } \big( \xi^{\eps} (\phi_j) \big)^{1/3}}{\left(1+ |j|^2\right)^k} \delta^{\frac{\gamma-2}{3} }\geq \frac{\delta'}{4} 
\right) 
\leq  C  \frac{\delta^{\frac{\gamma-2}{3}}}{ \delta'} \sum_j \frac{1}{  \left(1+ |j|^2\right)^k}  \bbE_\eps \left(   B_a \big(  \xi^{\eps} (\phi_j) \big) \right)^{1/3} .
\end{align*}

\medskip

We turn now to the proof of \eqref{eq: borne B a}. As $\gamma = 7/3$, this will be a consequence of the following inequality
\begin{equation}
\label{eq: objectif Ba} 
\forall \tau, \sigma \in [0,T]\, , \qquad 
\bbE_\eps \left[  \Big( \xi^{\eps}_\tau (\phi) - \xi^{\eps}_\sigma (\phi)\Big)^6  \right] 
\indc_{|\tau-\sigma| \geq a }  
\leq C \; \|\phi\|_{W^{2,\infty}}^6 \;  | \tau - \sigma |^{3/2}  \,.
\end{equation}
Applying Lemma \ref{Lem: cut-off approximation}, it is enough to derive \eqref{eq: objectif Ba}  
for the truncated process $\bar\xi^{\eps}$ with cut-off $R = \log \log \log \mu_\eps$ because
\begin{equation*}
\forall \tau \leq T, \qquad 
\bbE_\eps  \left[  \left(  \xi^{\eps}_\tau (\phi) - \bar \xi^{\eps}_\tau  (\phi) \right)^6 \right] 
\leq C \| \phi \|^6_{L^6 (\T^d )} e^{-R/4} 
\leq  C \| \phi \|^6_{L^6 (\T^d )} \,   a^2  \,,
\end{equation*}
with $a$ defined in \eqref{eq: choice parameters}.

Our starting point  is the asymptotic factorization (\ref{WickR}) of the moments leading to the following formula for the time increments  
\begin{equation}
\label{factorization}
\begin{aligned}
 &\left| \bbE_\eps \left[  \Big(\bar\xi^{\eps}_\tau  (\phi)
 - \bar\xi^{\eps}_\sigma (\phi)\Big)^6  \right]  
 - 15 \;
 \bbE_\eps \left[  \Big( \bar\xi^{\eps}_\tau  (\phi)- \bar\xi^{\eps}_\sigma (\phi)\Big)^2\right]^3  \right| \\
&\qquad  
\leq C_6 R^6  \, \| \phi \|_{L^\infty}^6   \,
\Big(\frac {CT} {\alpha^2}\Big) ^{11/2} (\log   \log \mu_\eps )^{-1/4} 
\leq  C \| \phi \|^6_{L^6 (\T^d )} \,  a^2,
\end{aligned}
\end{equation}
uniformly in~$\tau, \sigma \in [0, T]$, with our choice of scaling (\ref{scaling}).

Next we are going to use that, by (\ref{approximate-covarianceR}), the covariance is well approximated by the solution to the linearized Boltzmann equation  (\ref{diffusive-beq}).  
Denoting by $\tilde g_\alpha$ the solution of the linearized Boltzmann equation \eqref{diffusive-beq}  with truncated initial data \eqref{eq: truncated initial data}, we get that 
\begin{equation}
\label{approximate-covariance2}
\begin{aligned}
\sup_{\sigma , \tau \in [0,T ]} & \left| \bbE_\eps \left[  \Big(  \bar\xi^{\eps}_\tau  (\phi)- \bar\xi^{\eps}_\sigma (\phi)\Big)^2\right]  - 2 \int \cM ( \bar  g^0 - \tilde g_\alpha (\tau- \sigma) ) \,   g^0  dxdv\right|\\
&  \leq C R^2 \|\phi\|_{W^{1,\infty}}^2  \, \left( \frac{C T^3  } {\alpha^6} \right)^{1/2}  (\log \log \mu_\eps)^{-1/4} 
+ C\|\phi\|_{L^2}^2  e^{-R/4}  
\leq C \, \|\phi\|_{W^{1,\infty}}^2 \;  a^2  \,,
\end{aligned}
\end{equation}
using the time  invariance of the equilibrium measure
and the control (\ref{cutoff}) to remove  the velocity cutoff on (one of) the initial data $\bar g^0$ in the integral.
>From (\ref{time-control}) we have
$$
\begin{aligned}
& \d_\tau \Big( P  \la \tilde g_\alpha v\ra - \alpha  P\nabla_x  \cdot \la   \tilde g_\alpha \tilde A \ra  \Big) 
-  P\nabla_x \cdot \la v\cdot \nabla_x  ( \tilde g_\alpha \tilde A )\ra=0 \, ,\\
&
\frac{1}{d+2}
 \d_\tau   \Big(   \la \tilde g_\alpha (|v|^2 -d-2)\ra  - 2 \alpha    \nabla_x \cdot  \la \tilde g_\alpha  \tilde B \ra \Big) 
-  \frac{2}{d+2}\nabla_x \cdot \la v\cdot \nabla_x( \tilde g_\alpha  \tilde B)\ra=0,
\end{aligned}
$$
so thanks to the uniform $L^\infty_\tau ( L^2(\cM dxdv))$ bound on $\tilde g_\alpha$ we deduce that 
\begin{equation}
\label{eq: continuite H-2}
\begin{aligned}
& P \la \tilde g_\alpha v\ra -  \alpha  P\nabla_x\cdot \la  \tilde g_\alpha \tilde A \ra \hbox{ is uniformly bounded in } W^{1,\infty}_\tau ( \bbH^{-2} ) ,\\
 &\la \tilde g_\alpha { |v|^2 -(d+2) \over 2}\ra - \alpha \nabla_x \cdot \la  \tilde g_\alpha  \tilde B \ra \hbox{ is uniformly bounded in } W^{1,\infty}_\tau (   \bbH^{-2} ) \,.
 \end{aligned}
\end{equation}
We then have to control the time regularity of the  $O(\alpha)$ terms in \eqref{eq: continuite H-2}. 
>From the uniform $L^\infty_\tau ( L^2(\cM dxdv))$ bound on $\tilde g_\alpha$, we get
that for any polynomial $\mathfrak{p}(v)$ depending only on $v$ 
\begin{equation}
\label{eq: borne H-1}
\begin{aligned}
\forall \tau \in [0,T], \qquad \| \nabla_x  \la  \tilde g_\alpha \mathfrak{p}(v) \ra \|_{ \bbH^{-1} } \leq C.
 \end{aligned}
\end{equation}
Applying the kinetic equation~(\ref{diffusive-beq}), we know that  
$$
\d_\tau \la \tilde g_\alpha \mathfrak{p}(v)  \ra +  \frac{1}{\alpha} \nabla_x  \cdot \la \tilde g_\alpha  \mathfrak{p}(v) v\ra + \frac1{\alpha^2} \la \cL \tilde g_\alpha  \mathfrak{p}(v) \ra =0 
\Rightarrow 
 \left \|   \la  \tilde g_\alpha (\tau)  \mathfrak{p}(v) \ra 
-    \la  \tilde g_\alpha(\sigma) \mathfrak{p}(v)\ra\right \|_{ \bbH^{-1} } 
\leq C {|\tau- \sigma | \over \alpha^2}.
$$
Replacing $\mathfrak{p}$ by $\tilde A , \tilde B$ in the previous estimates,  we conclude that
$$
 \begin{aligned}
& \left \|  \alpha  P\nabla_x\cdot \la  \tilde g_\alpha (\tau)  \tilde A \ra 
- \alpha  P\nabla_x\cdot \la  \tilde g_\alpha(\sigma) \tilde A \ra\right \|_{ \bbH^{-2} } \leq C\min \left(\alpha, {|\tau- \sigma | \over \alpha}\right) \leq C |\tau- \sigma |^{1/2} , \\
&  \left\|  \alpha \nabla_x \cdot \la  \tilde g_\alpha  (\tau) \tilde B \ra
 - \alpha \nabla_x \cdot \la  \tilde g_\alpha(\sigma)   \tilde B \ra\right\|_{\bbH^{-2}} 
\leq C\min \left(\alpha, {|\tau- \sigma | \over \alpha}\right) \leq C |\tau- \sigma |^{1/2}\,.
 \end{aligned}
$$
Therefore,  applying \eqref{eq: continuite H-2}, we deduce that the bulk velocity
$P \la \tilde g_\alpha v\ra$ and temperature $ \la \tilde g_\alpha { |v|^2 -(d+2) \over d+ 2}\ra$ are uniformly bounded in $C^{1/2}_\tau ( \bbH^{-2}_x)$. 
Since the initial data $g^0$  is well prepared (see \eqref{eq: well prepared}), 
we deduce  that the term involving the linearized equation in \eqref{approximate-covariance2} is controlled by 
$$
\begin{aligned}
\Big| \int \cM ( \bar  g^0 - \tilde g_\alpha (\tau- \sigma) ) \,  g^0  dxdv \Big| 
& \leq \left|  \int  \Big( P \la \tilde g_\alpha(\tau - \sigma)  v\ra - P \la \bar g_0 v\ra\Big)  \cdot u_0 dx \right|\\
&  +\frac{d+2}2 \left|  \int \Big( \la \tilde g_\alpha (\tau - \sigma) { |v|^2 -(d+2) \over 2}\ra - \la \bar  g_0 { |v|^2 -(d+2) \over 2}\ra\Big) \theta_0 dx\right|  \\
& \leq C \, \|\phi\|_{W^{2,\infty}}^2 \; | \tau- \sigma  |^{1/2}.
\end{aligned}
$$

\medskip

Combining (\ref{factorization})-(\ref{approximate-covariance2}) and the time regularity of the covariance, we get that for  $|\tau- \sigma | \geq a$
$$
\bbE_\eps \left[  \Big(  \xi^{\eps}_\tau  (\phi) -  \xi^{\eps}_\sigma (\phi)\Big)^6  \right]   
1_{|\tau- \sigma | \geq a}
\leq C \;    \|\phi\|_{W^{2,\infty}}^6 \; | \tau- \sigma  |^{3/2} . 
$$
This completes the proof of Inequality \eqref{eq: objectif Ba}.

\medskip

\noindent
{\bf Step 2. Control of the very short kinetic times.}

Finally, it remains to control the second term in \eqref{eq: borne H -k}.
By splitting the time interval $[0,T]$ into intervals with kinetic time length scale $\alpha^2$, 
the estimate can be reduced, by using the invariant measure and an union bound,  to 
\begin{align}
\label{eq: temps cinetiques}
\bbP_\varepsilon \left( 
\sum_j \frac{4}{\left(1+ |j|^2\right)^k} \sup_{ | \sigma - \tau | \leq 2 a  \atop \sigma,\tau \in [0,T] } |  \xi^{\eps}_\tau  (\phi_j)-  \xi^{\eps}_\sigma (\phi_j)\big |^2 \geq \frac{\delta'}{4}
\right) \leq \frac{T}{\alpha^2} \bbP_\varepsilon \left( \cA \right),
\end{align}
with the notation
\begin{align}
\label{eq: def very short kinetic times}
\cA := \left \{ \sum_j \frac{4}{\left(1+ |j|^2\right)^k} \sup_{ | \sigma - \tau | \leq 2 a  \atop \sigma,\tau \in [0, \alpha^2] } |  \xi^{\eps}_\tau  (\phi_j)-  \xi^{\eps}_\sigma (\phi_j)\big |^2 \geq \frac{\delta'}{4} \right\}.
\end{align}
Recalling that $a \ll \alpha^2$, we are going to show that 
\begin{align}
\label{eq: very short kinetic times}
\lim_{\mu_\eps \to \infty} \frac{1}{\alpha^2} \bbP_\varepsilon \left( \cA \right) = 0,
\end{align}
which is essentially the outcome of Proposition 6.2.3 in  \cite{BGSS2}, however the proof 
cannot be applied directly in our context
and we explain below the necessary adjustments.

\medskip

First of all, the test functions are now unbounded in $v$ (contrary to the Fourier-Hermite modes).
Thus an energy cut-off is necessary. For technical reasons,  we are going to use a larger truncation parameter $\tilde R = (\log \mu_\eps)^2$ instead of 
$R =  \alpha^{-1}$ introduced in \eqref{scaling}. The corresponding truncated process is  defined as in \eqref{u-theta-defR} 
and denoted by $(\tilde \xi^{\eps}_\tau)_{\tau \geq 0}$. 
We are going to check that with high probability 
both processes coincide because all the velocities remain smaller than $\sqrt{\tilde R}$ 
\begin{equation}
\label{eq: truncation tilde}
\lim_{\mu_\eps \to \infty} 
\frac{1}{\alpha^2} \bbP_\varepsilon \left(  \exists i, \quad \sup_{t \leq \alpha} \big| {\bf v}^{\e}_i(t)  \big| >  \sqrt{\tilde R} \right) = 0.
\end{equation}
This can be deduced from a result of  \cite{BGSS3} as follows.
Fix  ${n} = 4d, \eta = \e^{1 - \frac{1}{2d}}$ and 
call {\it microscopic cluster of size~${n}$} a set~$\cG $ of~${n}$ particle configurations in~$\T ^d\times\R^d$ such that~$(z,z') \in \cG\times\cG$ if and only if there 
are~$z_{1} = z,z_2,\dots,z_{\ell}= z'$ in~$\cG$ such that
\begin{equation*}
\label{eq: cut off delta}
 |x_{i}-x_{i+1} | \leq 3   \, \sqrt{\tilde R} \, \eta , \quad \forall1 \leq i \leq \ell-1 \, .
\end{equation*} 
Let $\Upsilon^\eps_N$ be the set of initial configurations~${\bf Z}^{\eps 0}_N \in {\mathcal D}^{\eps}_{N}$ such that for any integer~$1 \leq k \leq \frac{\alpha}{ \eta}$, the configuration at time~$k \eta$ satisfies
\begin{equation}
\label{eq: contrainte vitesses}
\forall 1 \leq j \leq N , \qquad    |v_j | \leq \frac{\sqrt{\tilde R}}{{n}} \, , 
\end{equation}
and any microscopic cluster of particles is of size at most~$ {n}$.
Adapting to our framework the proof of Proposition 2.7 of \cite{BGSS3} implies that 
\begin{equation}
\label{eq: conditioning}
\bbP_\eps \big(  ^c \Upsilon^\eps_\cN \big) \leq  \frac{1}{\alpha^n}  \; \eps^d.
\end{equation}
We check that for any configuration in $\Upsilon^\eps_\cN$, the velocities are bounded from above by $\sqrt{\tilde R}$  during the kinetic time interval $[0, \alpha]$. 
Indeed, at each intermediate time $k \eta$, the velocities of  configurations in $\Upsilon^\eps_\cN$,  are    smaller than $\frac{\sqrt{\tilde R}}{n}$  by \eqref{eq: contrainte vitesses}. Furthermore the   clusters  are all 
of size less than ${n}$ and in the time interval $[k \, \eta, (k+1) \eta]$ particles within a cluster cannot interact with particles in other clusters. As the total kinetic energy of a finite number of particles is preserved by the hard sphere dynamics, the velocity of each particle will remain less than $\sqrt{\tilde R}$.
Thus \eqref{eq: truncation tilde} is implied by \eqref{eq: conditioning}.

\medskip

We are now in position to complete the proof of \eqref{eq: very short kinetic times}.
Thanks to \eqref{eq: truncation tilde}, it is enough to replace the event $\cA$ by the 
similar event $\tilde \cA$ for the process $(\tilde \xi^{\eps}_\tau)_{\tau \geq 0}$.
It thus remains to prove 
\begin{equation}
\label{eq: very short kinetic times tilde}
\lim_{\mu_\eps \to \infty} \frac{1}{\alpha^2} \bbP_\varepsilon \left( \tilde \cA \right) = 0.
\end{equation}
The statement of Proposition 6.2.3 from \cite{BGSS2} is not precise enough to conclude directly mainly due to the diverging prefactor $\frac{1}{\alpha^2}$. However all the required estimates can be found in \cite{BGSS2} and we are going to detail the relevant parts of the argument.

We proceed as in \eqref{eq: borne H -k} and introduce an additional time cut-off $\mu_\eps^{-7/3}$ instead of $a$ to filter the very small scales
\begin{align*}
\frac{1}{\alpha^2} \bbP_\varepsilon \left( \tilde \cA \right)  = 
\frac{1}{\alpha^2} & \bbP_\eps \left( \sup_{0 \leq \tau, \sigma \leq \alpha^2 \atop | \tau - \sigma | \leq 2a} 
\big\| \tilde \xi^{\eps}_\tau  - \tilde  \xi^{\eps}_\sigma \big\|_{-k}^2 \geq \frac{\delta'}{16}  \right) 
\leq 
\frac{1}{\alpha^2} \bbP_\eps \left( \sum_j \frac{C^2  \hat B_{\mu_\eps^{-7/3}} \big(  \xi^{\eps} (\phi_j) \big)^{1/3}}{\left(1+ |j|^2\right)^k} a^{\frac{2 \gamma-4}{3} }\geq   \frac{\delta'}{64}  \right) \\
 & \quad 
 + \frac{1}{\alpha^2} \bbP_\varepsilon \left( 
\sum_j \frac{4}{\left(1+ |j|^2\right)^k} \sup_{ | \sigma - \tau | \leq 2 \mu_\eps^{-7/3}  \atop \sigma,\tau \in [0,\alpha^2] }
| \tilde \xi^{\eps}_\tau  (\phi_j) - \tilde \xi^{\eps}_\sigma (\phi_j)\big |^2 \geq \frac{\delta'}{64}
\right), \nonumber
\end{align*}
with the analogous notation of  \eqref{eq: bound GRR alpha 0} on this short time scale
\begin{equation*}
\hat B_{\mu_\eps^{-7/3}} (F) : = 
\int_0^{\alpha^2}  \int_0^{\alpha^2}  d \sigma d \tau \;
 \frac{| F_\tau - F_\sigma |^b}{| \tau - \sigma |^\gamma} {\bf 1}_{| \tau - \sigma | > \mu_\eps^{-7/3}}\quad
\text{with} \quad b = 6\, ,  \gamma = 7/3.
\end{equation*}
In our procedure, it was necessary to use first a time cut-off $a$ in  \eqref{eq: borne H -k}  in order to reduce to estimates in the kinetic time scale. Indeed the error term \eqref{approximate-covariance2}
occurring in the comparison with the limiting equations on the diffusive time scale $[0,T]$ was too crude to be efficient up to the smallest time scale $\mu_\eps^{-7/3}$. On the kinetic scale better controls can be derived and one can show as in Lemma 6.2.6 of \cite{BGSS2} (with the Remark 6.2.8 to take care of the large velocities) that 
\begin{align*}
\frac{1}{\alpha^2} \bbP_\eps \left( \sum_j \frac{C^2  \hat B_{\mu_\eps^{-7/3}} \big( \tilde \xi^{\eps} (\phi_j) \big)^{1/3}}{\left(1+ |j|^2\right)^k} a^{\frac{2 \gamma-4}{3} }
\geq   \frac{\delta'}{64}  \right)
\leq C^2 \frac{64}{\alpha^2 \, \delta'} a^{\frac{2 \gamma-4}{3} }.
 \end{align*}
As $a \ll \alpha$, this term vanishes in the diffusive limit.
By using the proof of Lemma 6.2.5 of \cite{BGSS2} (with the Remark 6.2.8 to take care of the logarithmic divergence), we deduce that second term vanishes also in the diffusive limit
\begin{align*}
\frac{1}{\alpha^2} \bbP_\varepsilon \left( 
\sum_j \frac{4}{\left(1+ |j|^2\right)^k} \sup_{ | \sigma - \tau | \leq 2 \mu_\eps^{-7/3}  \atop \sigma,\tau \in [0,\alpha^2] }
| \tilde \xi^{\eps}_\tau  (\phi_j) - \tilde \xi^{\eps}_\sigma (\phi_j)\big |^2 \geq \frac{\delta'}{64}
\right) \leq \frac{C}{\alpha^2}   \mu_\eps^{-1/3} \to 0.
\end{align*}
Combining the previous results, \eqref{eq: very short kinetic times tilde} holds.
This completes the proof of Proposition \ref{thmtight diffusive}.
\end{proof}

\end{document}